\newcommand{\indic}{\mathbb{I}}
\numberwithin{equation}{section} \theoremstyle{plain}
\newtheorem{thm}{Theorem}[section]
\newtheorem{prop}{Proposition}[section]
\newtheorem{rem}{Remark}[section]
\begin{document}

\begin{frontmatter}
\title{New Dirichlet Mean Identities}
\runtitle{Dirichlet Means}
\begin{aug}
\author{\fnms{Lancelot F.} \snm{James}\thanksref{t1}\ead[label=e1]{lancelot@ust.hk}}

\thankstext{t1}{Supported in part by the grant HIA05/06.BM03 of the HKSAR }

\runauthor{Lancelot F. James}

\affiliation{Hong Kong University of Science and
Technology}

\address[a]{Lancelot F. James,\\ The Hong Kong University of Science and
Technology, \\Department of Information and Systems Management,\\
Clear Water Bay, Kowloon, Hong Kong. \printead{e1}.}


\end{aug}

\begin{abstract}
An important line of research is the investigation of the laws of
random variables known as Dirichlet means as discussed in
Cifarelli and Regazzini~\cite{CifarelliRegazzini}. However there
is not much information on inter-relationships between different
Dirichlet means. Here we introduce two distributional operations,
which consist of multiplying a mean functional by an independent
beta random variable and an operation involving an exponential
change of measure. These operations identify relationships between
different means and their densities. This allows one to use the
often considerable analytic work to obtain results for one
Dirichlet mean to obtain results for an entire family of otherwise
seemingly unrelated Dirichlet means. Additionally, it allows one
to obtain explicit densities for the related class of random
variables that have generalized gamma convolution distributions,
and the finite-dimensional distribution of their associated L\'evy
processes. This has implications in, for instance, the explicit
description of Bayesian nonparametric prior and posterior models,
and more generally  in a variety of applications in probability
and statistics involving L\'evy processes. We demonstrate how the
technique applies to several interesting examples
\end{abstract}

\begin{keyword}[class=AMS]
\kwd[Primary ]{62G05} \kwd[; secondary ]{62F15}
\end{keyword}

\begin{keyword}
\kwd beta-gamma algebra, Dirichlet means and processes,
exponential tilting, generalized gamma convolutions, L\'evy
processes.
\end{keyword}


\end{frontmatter}
\section{Introduction}
In this work we present two distributional operations which
identify relationships between seemingly different classes of
random variables which are representable as linear functionals of
a Dirichlet process, otherwise known as Dirichlet means.
Specifically the first operation consists of multiplication of a
Dirichlet mean by an independent beta random variable and the
second operation involves an exponential change of measure to the
density of a related infinitely divisible random variable having a
generalized gamma convolution distribution~(GGC). This latter
operation is often referred to in the statistical literature as
\emph{exponential tilting} or in mathematical finance as an
\emph{Esscher transform}. We believe our results add a significant
component to the foundational work of Cifarelli and
Regazzini~\cite{CifarelliRegazzini79, CifarelliRegazzini}. In
particular, our results allow one to use the often considerable
analytic work to obtain results for one Dirichlet mean to obtain
results for an entire family of otherwise seemingly unrelated mean
functionals. It also allows one to obtain explicit densities for
the related class of infinitely divisible random variables which
are generalized gamma convolutions, and the finite-dimensional
distribution of their associated L\'evy processes,(see
Bertoin~\cite{Bertoin} for the formalities of general L\'evy
processes). The importance of this latter statement is that L\'evy
processes now commonly appear in variety of applications in
probability and statistics. A detailed summary and outline of our
results may be found in section~\ref{sec:outline}. Some background
information, and notation, on Dirichlet proceses and Dirichlet
means, their connection with GGC random variables, recent
references and some motivation for our work is given in the next
section.

\subsection{Background and motivation}
Let $X$ be a non-negative random variable with cumulative
distribution function $F_{X}$. Note furthermore for a measurable
set $C,$ we use the notation $F_{X}(C)$ to mean the probability
that $X$ is in $C.$ One may define a Dirichlet process random
probability measure, see \cite{Freedman} and
\cite{Ferguson73,Ferguson74}, say $P_{\theta},$ on $[0,\infty)$
with total mass parameter $\theta$ and \emph{prior} parameter
$F_{X},$ via its finite dimensional distribution as follows; for
any disjoint partition on $[0,\infty)$, say $(C_{1},\ldots,
C_{k})$, the distribution of the random vector
$(P_{\theta}(C_{1}),\ldots,P_{\theta}(C_{k}))$ is a $k$-variate
Dirichet distribution with parameters $(\theta
F_{X}(C_{1}),\ldots, \theta F_{X}(C_{k})).$  Hence for each $C$,
$$P_{\theta}(C)=\int_{0}^{\infty}\indic(x\in C)P_{\theta}(dx)$$ has a beta distribution with
parameters $(\theta F_{X}(C),\theta (1-F_{X}(C))).$ Equivalently
setting $\theta F_{X}(C_{i})=\theta_{i}$ for $i=1,\ldots,k,$
$$
(P_{\theta}(C_{1}),\ldots, P_{\theta}(C_{k}))
\overset{d}=\left(\frac{G_{\theta_{i}}}{G_{\theta}};
i=1,\ldots,k\right)
$$
where $(G_{\theta_{i}})$ are independent random variables with
gamma$(\theta_{i},1)$ distributions and
$G_{\theta}=G_{\theta_{1}}+\cdots+G_{\theta_{k}}$ has a
gamma$(\theta,1)$ distribution. This means that one can define the
Dirichlet process via the normalization of an independent
increment gamma process on $[0,\infty)$, say
$\gamma_{\theta}(\cdot),$ as
$$
P_{\theta}(\cdot)=\frac{\gamma_{\theta}(\cdot)}{\gamma_{\theta}([0,\infty))}
$$
where $\gamma_{\theta}(C_{i})\overset{d}=G_{\theta_{i}}$ and whose
almost surely finite total random mass is
$\gamma_{\theta}([0,\infty))\overset{d}=G_{\theta}.$ A very
important aspect of this construction is the fact that
$G_{\theta}$ is independent of $P_{\theta},$ and hence any
functional of $P_{\theta}.$ This is a natural generalization of
Lukacs'\cite{Lukacs} characterization of beta and gamma random
variables, whose work is fundamental to what is now referred to as
the beta-gamma algebra, (for more on this, see Chaumont and
Yor~(\cite{Chaumont}, section 4.2)). See also Emery and
Yor~\cite{EmeryYor} for some interesting relationships between
gamma processes, Dirichlet processes and Brownian bridges.

These simple representations and other nice features of the
Dirichlet process have, since the important work of
Ferguson~\cite{Ferguson73,Ferguson74}, contributed greatly
to the relevance and practical utility of the field of
Bayesian non and semi-parametric statistics. Naturally,
owing to the ubiquity of the gamma and beta random
variables, the Dirichlet process also arises in other
areas. One of the more interesting, and we believe quite
important, topics related to the Dirichlet process is the
study of the laws of random variables called Dirichlet mean
functionals, or simply Dirichlet means, which we denote as
$$
M_{\theta}(F_{X})\overset{d}=\int_{0}^{\infty}xP_{\theta}(dx),
$$
initiated in the works of Cifarelli and
Regazzini~\cite{CifarelliRegazzini79, CifarelliRegazzini}.
In~\cite{CifarelliRegazzini} the authors obtained an important
identity for the Cauchy-Stieltjes transform of order $\theta.$
This identity is often referred to as the Markov-Krein identity as
can be seen in for example, Diaconis and
Kemperman~\cite{Diaconis}, Kerov~\cite{Kerov} and Vershik, Yor and
Tsilevich~\cite{Vershik}, where these authors highlight its
importance to, for instance, the study of the Markov moment
problem, continued fraction theory and exponential representation
of analytic functions. This identity is later called the
Cifarelli-Regazzini identity in~\cite{James2005}. Cifarelli and
Regazzini~\cite{CifarelliRegazzini}, owing to their primary
interest, used this identity to then obtain explicit density and
cdf formulae for $M_{\theta}(F_{X}).$ The density formulae may be
seen as Abel type transforms and hence do not always have simple
forms, although we stress that they are still useful for some
analytic calculations. The general exception is the case of
$\theta=1$ which has a nice form. Some examples of works that have
proceeded along these lines are Cifarelli and
Melilli~\cite{CifarelliMelilli}, Regazzini, Guglielmi and di
Nunno~\cite{Regazzini2002}, Regazzini, Lijoi and
Pr\"unster\cite{Regazzini2003}, Hjort and Ongaro~\cite{Hjort},
Lijoi and Regazzini~\cite{Lijoi}, and Epifani, Guglielmi and
Melilli~\cite{Epifani2004, Epifani2006}). Moreover, the recent
work of~Bertoin, Fujita, Roynette and Yor~\cite{BFRY} and James,
Lijoi and Pr\"unster~\cite{JLP}~(see also~\cite{JamesGamma} which
is a preliminary version of this work) show that the study of mean
functionals is relevant to the analysis of phenomena related to
Bessel and Brownian processes. In fact the work of James, Lijoi
and Pr\"unster~\cite{JLP} identifies many new explicit examples of
Dirichlet means which have interesting interpretations.

Related to these last points, Lijoi and
Regazzini~\cite{Lijoi} have highlighted a close connection
to the theory of generalized gamma
convolutions(see~\cite{BondBook}). Specifically, it is
known that a rich sub-class of random variables having
generalized gamma convolutions~(GGC) distributions may be
represented as
\begin{equation}
T_{\theta}\overset{d}=G_{\theta}M_{\theta}(F_{X})\overset{d}=\int_{0}^{\infty}x\gamma_{\theta}(dx).
\label{gammarep}
\end{equation}
We call these random variables GGC$(\theta,F_{X}).$ In additon we
see from~(\ref{gammarep}) that $T_{\theta}$ is a random variable
derived from a weighted gamma process, and hence the calculus
discussed in Lo~\cite{Lo82} and Lo and Weng~\cite{LW} applies. In
general GGC random variables are an important class of infinitely
divisible random variables whose properties have been extensively
studied by~\cite{BondBook} and others. We note further that
although we have written a GGC$(\theta,F_{X})$ random variable as
$G_{\theta}M_{\theta}(F_{X})$ this representation is not unique
and in fact it is quite rare to see $T_{\theta}$ represented in
this way. We will show that one can in fact exploit this
non-uniqueness to obtain explicit densities for $T_{\theta}$ even
when it is not so easy to do so for $M_{\theta}(F_{X}).$ While the
representation $G_{\theta}M_{\theta}(F_{X})$ is not unique it
helps one to understand the relationship between the Laplace
transform of $T_{\theta}$ and the Cauchy-Stieltjes transform of
order $\theta$ of  $M_{\theta}(F_{X}),$ which indeed characterizes
respectively the law of $T_{\theta}$ and $M_{\theta}(F_{X}).$
Specifically, using the independence property of $G_{\theta}$ and
$M_{\theta}(F_{X}),$ leads to, for $\lambda\ge 0,$
\begin{equation}
\mathbb{E}[{\mbox e}^{-\lambda T_{\theta}}]=\mathbb{E}[{(1+\lambda
M_{\theta}(F_{X}))}^{-\theta}]={\mbox
e}^{-\theta\psi_{F_{X}}(\lambda)} \label{CS}
\end{equation}
where \begin{equation}
\psi_{F_{X}}(\lambda)=\int_{0}^{\infty}\log(1+\lambda
x)F_{X}(dx)=\mathbb{E}[\log(1+\lambda X)]. \label{Levy}
\end{equation}
is the \emph{L\'evy exponent} of $T_{\theta}.$ We note that
$T_{\theta}$ and $M_{\theta}(F_{X})$ exist if and only if
$\psi_{F_{X}}(\lambda)<\infty$ for $\lambda>0,$(see for
instance~\cite{DS} and~\cite{BondBook}). The expressions
in~(\ref{CS}) equates with the the identity obtained by
Cifarelli and Regazzini~\cite{CifarelliRegazzini},
mentioned previously.

Despite these interesting results, there is very little work on
the relationship between  different mean functionals. Suppose, for
instance, that for each  fixed value of $\theta>0,$
 $M_{\theta}(F_{X})$ denotes a Dirichlet mean and  $(M_{\theta}(F_{Z_{c}});c>0)$
 denotes a collection of Dirichlet mean random variables indexed
 by a family of distributions $(F_{Z_{c}};c>0 ).$
 Then one can ask the question, for what choices of $X$ and $Z_{c}$ are
 these mean functionals related, and in what sense? In particular,
 one may wish to know how their densities are related. The
 rationale here is that if such a relationship is established, then the effort that
 one puts forth to obtain results such as the explicit density of $M_{\theta}(F_{X}),$
 can be applied to an entire family of Dirichlet means
 $(M_{\theta}(F_{Z_{c}});c>0).$ Furthermore since
 Dirichlet means are associated with GGC random variables this
 would establish relationships between a GGC$(\theta,F_{X})$ random
 variable and a family of GGC$(\theta,F_{Z_{c}})$ random
 variable
 Simple examples are of course the choices $Z_{c}=X+c$ and
 $Z_{c}=c X,$ which, due to the linearity properties of mean functionals, results easily in the identities in law
 $$
 M_{\theta}(F_{X+c})=c+M_{\theta}(F_{X}){\mbox { and }}
 M_{\theta}(F_{c X})=cM_{\theta}(F_{X})
$$

Naturally, we are going to discuss more complex relationships, but
with the same goal. That is, we will identify non-trivial
relationships so that the often considerable efforts that one
makes in the study of one mean functional $M_{\theta}(F_{X})$ can
be then used to obtain more easily results for other mean
functionals, their corresponding GGC random variables and L\'evy
processes. In this paper we will describe two such operations
which we elaborate on in the next subsection.

\subsection{Outline and summary of results}\label{sec:outline}
Section~\ref{sec:prelim} reviews some of the existing formulae for
the density and cdf of Dirichlet means. In Section~\ref{sec:beta},
we will describe the operation of multiplying a mean functional
$M_{\theta\sigma}(F_{X})$ by an independent beta random variable
with parameters $(\theta \sigma,\theta(1-\sigma)),$ say,
$\beta_{\theta\sigma,\theta(1-\sigma)}$ where $0<\sigma<1.$ We
call this operation \emph{beta scaling}. Theorem~\ref{thm1beta}
shows that the resulting random variable
$\beta_{\theta\sigma,\theta(1-\sigma)}M_{\theta\sigma}(F_{X})$ is
again a mean functional but now of order $\theta$. In addition,
the GGC$(\theta\sigma,F_{X})$ random variable $G_{\theta
\sigma}M_{\theta\sigma}(F_{X})$ is equivalently a GGC random
variable of order $\theta.$ Now keeping in mind that tractable
densities of mean functionals of order $\theta=1$ are the easiest
to obtain, Theorem~\ref{thm1beta} shows that by setting
$\theta=1$, the densities of the uncountable collection of random
variables $(\beta_{\sigma,1-\sigma}M_{\sigma}(F_{X}); 0<\sigma\leq
1),$ are all mean functionals of order $\theta=1.$
Theorem~\ref{thm2beta} then shows that efforts used to calculate
the explicit density of any one of these random variables, via the
formulae of~\cite{CifarelliRegazzini}, lead to the explicit
calculation of the densities of all of them. Additionally,
Theorem~\ref{thm2beta} shows that the corresponding GGC random
variables may all be expressed as GGC random variables of order
$\theta=1,$ representable in distribution as
$G_{1}\beta_{\sigma,1-\sigma}M_{\sigma}(F_{X})$. A key point here
is that Theorem~\ref{thm2beta} gives a tractable density for
$\beta_{\sigma,1-\sigma}M_{\sigma}(F_{X})$ without requiring
knowledge of the density of $M_{\sigma}(F_{X}),$ which is usually
expressed in a complicated manner. These results also will yield
some non-obvious integral identities. Furthermore, noting that a
GGC$(\theta,F_{X})$ random variable, $T_{\theta},$ is infinitely
divisible, we associate it with an independent increment process
$(\zeta_{\theta}(t); t\ge 0)$ known as a subordinator~,(a
non-decreasing non-negative L\'evy process), where for each fixed
$t,$
$$
\mathbb{E}[{\mbox e}^{-\lambda
\zeta_{\theta}(t)}]=\mathbb{E}[{\mbox e}^{-\lambda
T_{\theta t}}]={\mbox e}^{-t\theta \psi_{F_{X}}(\lambda)}.
$$
That is, marginally $\zeta_{\theta}(1)\overset{d}=T_{\theta}$ and
$\zeta_{\theta}(t)\overset{d}=\zeta_{\theta
t}(1)\overset{d}=T_{\theta t}.$ In addition, for $s<t,$
$\zeta_{\theta}(t)-\zeta_{\theta}(s)\overset{d}=\zeta_{\theta}(t-s)$
is independent of $\zeta_{\theta}(s).$ We say that the process
$(\zeta_{\theta}(t); t\ge 0)$ is a GGC$(\theta,F_{X})$
subordinator. Proposition~\ref{prop1beta} shows how
Theorems~\ref{thm1beta} and~\ref{thm2beta}, can be used to address
the usually difficult problem of describing explicitly the
densities of the finite-dimensional distribution of a
subordinator~(see \cite{Kingman75}). This has implications in, for
instance, the explicit description of densities of Bayesian
nonparametric prior and posterior models. But clearly is of wider
interest in terms of the distribution theory of infinitely
divisible random variables and associated processes.

In Section~\ref{sec:gamma}, we describe how the operation of
exponentially titling the density of a GGC$(\theta, F_{X})$ random
variable leads to a relationship between the densities of the mean
functional $M_{\theta}(F_{X})$ and yet another family of mean
functionals. This is summarized in Theorem~\ref{thm1gamma}.
Section~\ref{sec:tiltbeta} then discusses a combination of the two
operations. Proposition~\ref{prop1gamma} describes the density of
beta scaled and tilted mean functionals of order 1. Using this,
Proposition~\ref{prop2gamma} describes a method to calculate a key
quantity in the explicit description of the density and cdf of
mean functionals. In section~\ref{sec:example} we demonstrate how
our results can be applied to extend and explain results related
to two well known cases of Dirichlet mean functionals. However, we
emphasize that Proposition~\ref{prop41},~\ref{prop42}
and~\ref{prop43} are genuinely new results to the literature. More
complex applications, which may be viewed as extensions of
section~\ref{sec:CifarelliMelilli}, may be found in an unpublished
preliminary version of this work in~\cite{JamesGamma}. We discuss
and develop these further in~James~\cite{JamesLinnik}. Section 5
presents a more involved result relative to those in section 4,
but which does not require a great deal of background material.
Here we show how the results in section 2 are used to derive the
finite dimensional distribution and related quantities of a class
of subordinators recently studied in~\cite{BFRY}.

\subsection{Preliminaries}\label{sec:prelim}
Suppose that $X$ is a positive random variable with
distribution $F_{X}$, and define the function
$$
\Phi_{F_{X}}(t)=\int_{0}^{\infty}\log(|t-x|)\indic(t\neq
x)F_{X}(dx)=\mathbb{E}[\log(|t-X|)\indic(t\neq X)].
$$
Furthermore, define
$$
\Delta_{\theta}(t|F_{X})=\frac{1}{\pi}\sin(\pi \theta
F_{X}(t)){\mbox e}^{-\theta \Phi_{F_{X}}(t)},
$$
where using a Lebesque-Stieltjes integral,
$F_{X}(t)=\int_{0}^{t}F_{X}(dx).$ Cifarelli and
Regazzini~\cite{CifarelliRegazzini}~(see
also~\cite{CifarelliMelilli}), apply inversion formula to
obtain the distributional formula for $M_{\theta}(F_{X})$
as follows. For all $\theta>0$, the cdf can be expressed as
\begin{equation}
\label{DPcdf} \int_{0}^{x}{(x-t)}^{\theta-1} \Delta_{\theta
}(t|F_{X})dt
\end{equation}
provided that $\theta F_{X}$ possesses no jumps of size
greater than or equal to one. If we let $\xi_{\theta
F_{X}}(\cdot)$ denote the density of $M_{\theta}(F_{X}),$
it takes its simplest form for $\theta=1$, which is
\begin{equation} \label{M1}
\xi_{F_{X}}(x)=\Delta_{1}(x|F_{X})=\frac{1}{\pi}\sin(\pi
F_{X}(x)){\mbox e}^{-\Phi(x)}. \end{equation} Density formulae for
$\theta>1$ are described as
\begin{equation} \label{M2}
\xi_{\theta F_{X}}(x)= {(\theta-1)}\int_{0}^{x}{(x-t)}^{\theta-2}
\Delta_{\theta }(t|F_{X})dt.
\end{equation}

An expression for the density, which holds for all
$\theta>0$, was recently obtained by James, Lijoi and
Pr\"unster~\cite{JLP} as follows,
\begin{equation} \label{generaldensity} \xi_{\theta F_{X}}(x)=
 \frac{1}{\pi}\int_{0}^{x}{(x-t)}^{\theta-1} d_{\theta}(t|F_{X})dt
\end{equation} where $$d_{\theta}(t|F_{X})=\frac{d}{dt}\sin(\pi \theta
F_{X}(t)){\mbox e}^{-\theta \Phi(t)}.$$ For additional
formula, see~\cite{CifarelliRegazzini, Regazzini2002,
Lijoi}.

\begin{rem} Throughout for random variables $R$ and $X,$ when we
write the product $RX$ we will assume unless otherwise
mentioned that $R$ and $X$ are independent. This convention
will also apply to the multiplication of the special random
variables that are expressed as mean functionals. That is
the product $M_{\theta}(F_{X})M_{\theta}(F_{Z})$ is
understood to be a product of independent Dirichlet means.
\end{rem}

\begin{rem} Throughout we will be using the fact that if $R$ is a gamma
random variable, then the independent random variables
$R,X,Z$ satisfying $RX\overset{d}=RZ$ imply that
$X\overset{d}=Z.$ This is true because gamma random
variables are simplifiable. For precise meaning of this
term and conditions, see Chaumont and Yor~\cite[sec. 1.12
and 1.13]{Chaumont}. This fact also applies to the case
where $R$ is a positive stable random variable.
\end{rem}

\section{Beta Scaling}~\label{sec:beta}In this section we investigate the simple operation of multiplying a Dirichlet mean functional
$M_{\theta}(F_{X})$ by certain beta random variables. Note first
that if $M$ denotes an arbitrary positive random variable with
density $f_{M},$ then by elementary arguments it follows that the
random variable $W\overset{d}=\beta_{a,b}M,$ where $\beta_{a,b}$
is beta$(a,b)$ independent of $M,$ has density expressible as
$$
f_{W}(w)=\frac{\Gamma(a+b)}{\Gamma(a)\Gamma(b)}\int_{0}^{1}f_{M}(w/u)u^{a-2}{(1-u)}^{b-1}du.
$$
However it is only in special cases where the density
$f_{W}$ can be expressed in even simpler terms. That is to
say, it is not obvious how to carry out the integration. In
the next results we show how remarkable simplifications can
be achieved when $M=M_{\theta}(F_{X}),$ in particular for
the range $0<\theta\leq 1,$ and $\beta_{a,b}$ is a
symmetric beta random variable. First we will need to
introduce some additional notation. Let $Y_{\sigma}$ denote
a Bernoulli random variable with success probability
$0<\sigma\leq 1.$ Then if $X$ is a random variable with
distribution $F_{X}$, independent of $Y_{\sigma},$ it
follows that the random variable $XY_{\sigma}$ has
distribution denoted as
\begin{equation}
F_{XY_{\sigma}}(dx)=\sigma
F_{X}(dx)+(1-\sigma)\delta_{0}(dx) \label{pdfp},
\end{equation}
and cdf
\begin{equation}
F_{XY_{\sigma}}(x)=\sigma F_{X}(x)+(1-\sigma)\indic(x\ge 0)
\label{cdfp}.
\end{equation}

Hence, there exists the mean functional
$$
M_{\theta}(F_{XY_{\sigma}})\overset{d}=\int_{0}^{\infty}y\tilde{P}_{\theta}(dy)
$$
where $\tilde{P}_{\theta}(dy)$ denotes a Dirichlet process
with parameters $(\theta, F_{XY_{\sigma}}).$ In addition we
have for $x>0,$
\begin{equation}
\Phi_{F_{XY_{\sigma}}}(x)=\mathbb{E}[\log(|x-XY_{\sigma}|)\indic(XY_{\sigma}\neq
x)]=\sigma\Phi_{F_{X}}(x)+(1-\sigma)\log(x) \label{Pid}.
\end{equation}
When $\sigma=1,$ $Y_{\sigma}=1$ and hence
$F_{XY_{1}}(x)=F_{X}(x).$ Let $E_{\sigma}$ denote a set
such that $\mathbb{E}[P_{\theta}(E_{\sigma})]=\sigma.$ Now
notice that every beta random variable, $\beta_{a,b},$
where $a, b$ are arbitrary positive constants, can be
represented as the simple mean functional,
$$
P_{\theta}(E_{\sigma})\overset{d}=\beta_{\theta
\sigma,\theta(1
-\sigma)}\overset{d}=M_{\theta}(F_{Y_{\sigma}}),$$ by
choosing
$$
\sigma=\frac{a}{a+b}{\mbox { and }}\theta=a+b.
$$
We note however that there are other choices of $F_{X}$
that will also yield beta random variables as mean
functionals. Throughout we will use the convention that
$\beta_{\theta,0}:=1,$ that is the case when $\sigma=1.$ We
now present our first result.

\begin{thm}~\label{thm1beta} For $\theta>0$ and $0<\sigma\leq 1$, let
$\beta_{\theta \sigma,\theta(1 -\sigma)}$ denote a beta
random variable with parameters $(\theta \sigma,
\theta(1-\sigma))$, independent of the mean functional
$M_{\theta \sigma}(F_{X}).$ Then
\begin{enumerate}
\item[(i)]$\beta_{\theta \sigma,\theta(1-\sigma)}M_{\theta
\sigma}(F_{X})\overset{d}=M_{\theta}(F_{XY_{\sigma}}).$
\item[(ii)]Equivalently, $M_{\theta}(F_{Y_{\sigma}})M_{\theta
\sigma}(F_{X})\overset{d}=M_{\theta}(F_{XY_{\sigma}}).$
\item[(iii)]$G_{\theta \sigma}M_{\theta \sigma}(F_{X})\overset{d}=G_{\theta}M_{\theta
}(F_{XY_{\sigma}})$
\item[(iv)]That is, GGC$(\theta \sigma,F_{X})=$GGC$(\theta,F_{XY_{\sigma}})$.
\end{enumerate}
\end{thm}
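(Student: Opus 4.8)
The plan is to prove the chain in the reverse of the stated order: establish the Laplace-transform identity (iii)--(iv) first, then descend to the multiplicative identity (i), with (ii) following immediately from the representation $\beta_{\theta\sigma,\theta(1-\sigma)}\overset{d}=M_\theta(F_{Y_\sigma})$ noted just above the theorem. The reason for this ordering is that (iii) compares two GGC random variables, whose laws are completely characterized by their Laplace transforms through (\ref{CS}) and (\ref{gammarep}); it therefore reduces to a single computation of L\'evy exponents rather than to an unwieldy convolution of densities, which is what a direct attack on (i) would demand.

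The crucial computation is the L\'evy exponent of $F_{XY_\sigma}$. Using the mixture form (\ref{pdfp}), namely $F_{XY_\sigma}(dx)=\sigma F_X(dx)+(1-\sigma)\delta_{0}(dx)$, together with the definition (\ref{Levy}), I would compute
\begin{equation*}
\psi_{F_{XY_\sigma}}(\lambda)=\int_{0}^{\infty}\log(1+\lambda x)\,F_{XY_\sigma}(dx)=\sigma\,\psi_{F_X}(\lambda),
\end{equation*}
since the atom at the origin contributes $\log(1+\lambda\cdot 0)=0$. Hence $\theta\,\psi_{F_{XY_\sigma}}=\theta\sigma\,\psi_{F_X}$. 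This single identity is the whole engine of the theorem; it also shows at once that $M_\theta(F_{XY_\sigma})$ exists precisely when $M_{\theta\sigma}(F_X)$ does, so the two sides are simultaneously well defined.

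With this in hand, (\ref{CS}) and (\ref{gammarep}) give, for $\lambda\ge 0$,
\begin{equation*}
\mathbb{E}[{\mbox e}^{-\lambda G_{\theta\sigma}M_{\theta\sigma}(F_X)}]={\mbox e}^{-\theta\sigma\,\psi_{F_X}(\lambda)}={\mbox e}^{-\theta\,\psi_{F_{XY_\sigma}}(\lambda)}=\mathbb{E}[{\mbox e}^{-\lambda G_{\theta}M_{\theta}(F_{XY_\sigma})}],
\end{equation*}
so by uniqueness of Laplace transforms $G_{\theta\sigma}M_{\theta\sigma}(F_X)\overset{d}=G_{\theta}M_{\theta}(F_{XY_\sigma})$, which is (iii), and (iv) is merely its restatement. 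To descend to (i) I would invoke the beta--gamma algebra, writing $G_{\theta\sigma}\overset{d}=\beta_{\theta\sigma,\theta(1-\sigma)}G_{\theta}$ with $\beta_{\theta\sigma,\theta(1-\sigma)}$ independent of $G_{\theta}$, and taking $M_{\theta\sigma}(F_X)$ independent of this pair. Substituting into (iii) yields $\beta_{\theta\sigma,\theta(1-\sigma)}\,G_{\theta}\,M_{\theta\sigma}(F_X)\overset{d}=G_{\theta}\,M_{\theta}(F_{XY_\sigma})$, where $G_{\theta}$ is independent of the remaining factors on each side (on the right by the defining independence of the Dirichlet total mass from any functional of $P_\theta$). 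Since gamma variables are simplifiable, per the second Remark, I may cancel $G_{\theta}$ to obtain (i), and (ii) then follows at once from $\beta_{\theta\sigma,\theta(1-\sigma)}\overset{d}=M_\theta(F_{Y_\sigma})$.

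The only delicate point is the bookkeeping of independence prior to applying simplifiability: one must check that the beta factor produced by the beta--gamma split is jointly independent of both $G_{\theta}$ and $M_{\theta\sigma}(F_X)$, so that the cancellation of $G_{\theta}$ on the left is as legitimate as on the right. Everything else is a direct consequence of the exponent identity $\psi_{F_{XY_\sigma}}=\sigma\,\psi_{F_X}$, and the argument is valid for all $\theta>0$ and $0<\sigma\le 1$, degenerating to a triviality when $\sigma=1$ under the convention $\beta_{\theta,0}:=1$.
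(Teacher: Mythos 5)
Your proposal is correct and follows essentially the same route as the paper's own proof: compute the L\'evy exponent identity $\psi_{F_{XY_\sigma}}=\sigma\,\psi_{F_X}$ to establish (iii)--(iv) via uniqueness of Laplace transforms, then split $G_{\theta\sigma}\overset{d}=\beta_{\theta\sigma,\theta(1-\sigma)}G_{\theta}$ and cancel $G_{\theta}$ by simplifiability to obtain (i), with (ii) equivalent to (i) through $\beta_{\theta\sigma,\theta(1-\sigma)}\overset{d}=M_{\theta}(F_{Y_{\sigma}})$. Your added attention to the joint independence needed before cancellation is a sound refinement of a point the paper leaves implicit.
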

\begin{proof}Since $M_{\theta}(F_{Y_{\sigma}})\overset{d}=\beta_{\theta
\sigma,\theta(1-\sigma)}$ statements~(i) and~(ii) are
equivalent. We proceed by first establishing~(iii)
and~(iv). Note that using~(\ref{Levy}),
$$
\mathbb{E}[\log(1+\lambda XY_{\sigma})]=\sigma
\mathbb{E}[\log(1+\lambda
X)]=\sigma\int_{0}^{\infty}\log(1+\lambda x)F_{X}(dx).
$$
Hence
$$
\mathbb{E}[{\mbox e}^{-\lambda
G_{\theta}M_{\theta}(F_{XY_{\sigma}})}]={\mbox e}^{-\theta
\sigma \int_{0}^{\infty}\log(1+\lambda
x)F_{X}(dx)}=\mathbb{E}[{\mbox e}^{-\lambda G_{\theta\sigma
}M_{\theta\sigma}(F_{X})}],
$$
which means that
$G_{\theta}M_{\theta}(F_{XY_{\sigma}})\overset{d}=G_{\theta\sigma
}M_{\theta\sigma}(F_{X}),$ establishing statements~(iii)
and~(iv). Now writing
$G_{\theta\sigma}=G_{\theta}\beta_{\theta
\sigma,\theta(1-\sigma)}.$ It follows that
$$
G_{\theta}M_{\theta}(F_{XY_{\sigma}})\overset{d}=G_{\theta}\beta_{\theta
\sigma,\theta(1-\sigma)}M_{\theta\sigma}(F_{X}).$$ Hence $
\beta_{\theta
\sigma,\theta(1-\sigma)}M_{\theta\sigma}(F_{X})\overset{d}=M_{\theta}(F_{XY_{\sigma}}),
$ by the fact that gamma random variables are simplifiable.
\end{proof}

When $\theta=1$, we obtain results for random variables
$\beta_{\sigma,1-\sigma}M_{\sigma}(F_{X}).$ The symmetric
beta random variables $\beta_{\sigma,1-\sigma}$ arise in a
variety of important contexts, and are often referred to as
generalized arcsine laws with density expressible as
$$
\frac{\sin(\pi
\sigma)}{\pi}u^{\sigma-1}{(1-u)}^{-\sigma}{\mbox { for
}}0<u<1.
$$
Now using (\ref{pdfp}) and (\ref{cdfp}), let $
\mathcal{C}(F_{X})=\{x:F_{X}(x)>0\},$ then for $x>0,$
\begin{equation}
\label{sinp} \sin(\pi
F_{XY_{\sigma}}(x))=\left\{\begin{array}{cc}
  \sin(\pi\sigma[1-F_{X}(x)]), &   {\mbox{ if }} x\in\mathcal{C}(F_{X}),\\
 \sin(\pi(1-\sigma)),& {\mbox{ if }} x\notin\mathcal{C}(F_{X}).\\
                     \end{array}\right.
\end{equation}
Note also that $\sin(\pi[1-F_{X}(x)])=\sin(\pi F_{X}(x)).$
The next result yields another surprising property of these
random variables.

\begin{thm}~\label{thm2beta}Consider the setting in the Theorem~\ref{thm1beta}. Then when $\theta=1$, it follows that for each fixed $0<\sigma\leq 1,$ the random variable
$M_{1}(F_{XY_{\sigma}})\overset{d}=\beta_{\sigma,1-\sigma}M_{\sigma}(F_{X})$
has density
\begin{equation}
\xi_{F_{XY_{\sigma}}}(x)=\frac{x^{\sigma-1}}{\pi}\sin(\pi
F_{XY_{\sigma}}(x)){\mbox e}^{-\sigma\Phi_{F_{X}}(x)}{\mbox
{ for }} x>0, \label{betaid}
\end{equation}
specified by~(\ref{sinp}). Since
GGC$(\sigma,F_{X})=$GGC$(1,F_{XY_{\sigma}}),$ this implies
that the random variable
$G_{\sigma}M_{\sigma}(F_{X})\overset{d}=G_{1}M_{1}(F_{XY_{\sigma}})$
has density
\begin{equation}
g_{\sigma,F_{X}}(x)=\frac{1}{\pi}\int_{0}^{\infty}{\mbox
e}^{-\frac{x}{y}}y^{\sigma-2}\sin(\pi
F_{XY_{\sigma}}(y)){\mbox e}^{-\sigma\Phi_{F_{X}}(y)}dy
\label{mix}
\end{equation}
\end{thm}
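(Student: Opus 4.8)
The plan is to prove Theorem~\ref{thm2beta} in two stages, corresponding to its two displayed formulas. For the first formula~(\ref{betaid}), I would start from the general $\theta=1$ density formula~(\ref{M1}), namely $\xi_{F_{Z}}(x)=\frac{1}{\pi}\sin(\pi F_{Z}(x)){\mbox e}^{-\Phi_{F_{Z}}(x)}$, and apply it to the prior $Z=XY_{\sigma}$. By Theorem~\ref{thm1beta}(i) with $\theta=1$ we already know $M_{1}(F_{XY_{\sigma}})\overset{d}=\beta_{\sigma,1-\sigma}M_{\sigma}(F_{X})$, so it suffices to evaluate the right-hand side of~(\ref{M1}) at $F_{Z}=F_{XY_{\sigma}}$. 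The two ingredients are already supplied in the excerpt: the identity~(\ref{Pid}) gives $\Phi_{F_{XY_{\sigma}}}(x)=\sigma\Phi_{F_{X}}(x)+(1-\sigma)\log(x)$, so that ${\mbox e}^{-\Phi_{F_{XY_{\sigma}}}(x)}=x^{-(1-\sigma)}{\mbox e}^{-\sigma\Phi_{F_{X}}(x)}=x^{\sigma-1}{\mbox e}^{-\sigma\Phi_{F_{X}}(x)}$, and the sine factor is exactly the piecewise expression in~(\ref{sinp}). Substituting both into~(\ref{M1}) yields~(\ref{betaid}) directly.

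For the second formula~(\ref{mix}), I would use the GGC equivalence GGC$(\sigma,F_{X})=$GGC$(1,F_{XY_{\sigma}})$ established in Theorem~\ref{thm1beta}(iii)--(iv), which gives $G_{\sigma}M_{\sigma}(F_{X})\overset{d}=G_{1}M_{1}(F_{XY_{\sigma}})$. Since $G_{1}$ is a standard exponential random variable independent of $M_{1}(F_{XY_{\sigma}})$, the density of the product $G_{1}M_{1}(F_{XY_{\sigma}})$ is obtained by the standard mixing (product density) computation: if $M_{1}(F_{XY_{\sigma}})$ has density $\xi_{F_{XY_{\sigma}}}(y)$, then the product has density $\int_{0}^{\infty}{\mbox e}^{-x/y}\,y^{-1}\,\xi_{F_{XY_{\sigma}}}(y)\,dy$, where the factor $y^{-1}$ and the kernel ${\mbox e}^{-x/y}$ come from the exponential density of $G_{1}$ evaluated at $x/y$. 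Inserting the explicit $\xi_{F_{XY_{\sigma}}}(y)$ from~(\ref{betaid}) then produces the $y^{\sigma-2}$ power (the $y^{\sigma-1}$ from~(\ref{betaid}) times the Jacobian $y^{-1}$) together with the sine and exponential-tilting factors, giving exactly~(\ref{mix}).

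The main point to handle carefully, rather than a deep obstacle, is the bookkeeping in the product-density step: I want to confirm that the representation $G_{1}M_{1}(F_{XY_{\sigma}})$ is a genuine product of independent variables (which follows from the independence of the total gamma mass and any Dirichlet functional, emphasized in the background) so that the convolution-type formula applies, and that the power of $y$ combines correctly after including the Jacobian. I would also note in passing that comparing the two expressions for the density of $G_{\sigma}M_{\sigma}(F_{X})$ --- the one in~(\ref{mix}) coming from the $\theta=1$ representation versus any direct GGC$(\sigma,F_{X})$ computation --- yields the ``non-obvious integral identities'' advertised in the outline, so it may be worth flagging this at the end. Overall the proof is short and essentially a substitution argument, leveraging Theorem~\ref{thm1beta} together with the preliminary density formula~(\ref{M1}) and the identities~(\ref{Pid}) and~(\ref{sinp}).
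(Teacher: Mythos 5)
Your proposal is correct and follows essentially the same route as the paper: the paper's (very terse) proof likewise applies the $\theta=1$ density formula~(\ref{M1}) to the prior $F_{XY_{\sigma}}$, invokes the identity~(\ref{Pid}) to produce the factor $x^{\sigma-1}{\mbox e}^{-\sigma\Phi_{F_{X}}(x)}$, and leaves the exponential-mixture step for~(\ref{mix}) implicit, which you have simply spelled out with the correct Jacobian bookkeeping.
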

\begin{proof}Since
$M_{1}(F_{XY_{\sigma}})\overset{d}=\beta_{\sigma,1-\sigma}M_{\sigma}(F_{X}),$
the density is of the form~(\ref{M1}),  for each fixed
$\sigma\in (0,1].$ Furthermore we use the identity
in~(\ref{Pid}).
\end{proof}
\begin{rem}
 It is worthwhile to mention that transforming to the random
 variable
$1/\beta_{\sigma,1-\sigma}$,~(\ref{betaid}) is equivalent
to the otherwise not obvious integral identity,
$$
\frac{\sin(\pi
\sigma)}{\pi}\int_{1}^{\infty}\frac{\xi_{\sigma
F_{X}}(xy)}{{(y-1)}^{\sigma}}dy
=\frac{x^{\sigma-1}}{\pi}\sin(\pi F_{XY_{\sigma}}(x)){\mbox
e}^{-\sigma\Phi(x)}.
$$
This leads to interesting results when the density
$\xi_{\sigma F_{X}}(x)$ has a known form. On the other
hand, we see that one does not need the explicit density of
$M_{\sigma}(F_{X})$ to obtain the density of
$M_{1}(F_{XY_{\sigma}})\overset{d}=\beta_{\sigma,1-\sigma}M_{\sigma}(F_{X}).$
In fact, owing to our goal of yielding simple densities for
many Dirichlet means from one mean, we see that the effort
to calculate the density of $M_{1}(F_{XY_{\sigma}}),$ for
each $0< \sigma\leq 1,$ is no more than what is needed to
calculate the density of $M_{1}(F_{X}).$
\end{rem}

We now see how this translates into the usually difficult
problem of describing explicitly the density of the
finite-dimensional distribution of a subordinator. In the
next result we use the notation $\zeta_{\theta}(C)$ to mean
$\int_{0}^{\infty}\indic\left(s\in
C\right)\zeta_{\theta}(ds).$

\begin{prop}~\label{prop1beta}Let $(\zeta_{\theta}(t);t\leq 1/\theta)$ denote a GGC$(\theta,F_{X})$ subordinator on $[0,1/\theta].$ Furthermore
let $(C_{1}, \ldots, C_{k})$ denote an arbitrary disjoint
partition of the interval $[0,1/\theta].$ Then the
finite-dimensional distribution
$(\zeta_{\theta}(C_{1}),\ldots,\zeta_{\theta}(C_{k}))$ has
a joint density
\begin{equation} \prod_{i=1}^{k}g_{\sigma_{i},F_{X}}(x_{i}),
\label{fidi1}
\end{equation}
where each $\sigma_{i}=\theta|C_{i}|>0$ and
$\sum_{i=1}^{k}\sigma_{i}=1.$ The density
$g_{\sigma_{i},F_{X}}$ is given by~(\ref{mix}). That is,
$\zeta_{\theta}(C_{i})\overset{d}=G_{1}M_{1}(F_{XY_{\sigma_{i}}})$
and are independent for $i=1,\ldots, k,$ where
$M_{1}(F_{XY_{\sigma_{i}}})\overset{d}=\beta_{\sigma_{i},1-\sigma_{i}}M_{\sigma_{i}}(F_{X})$
has density
$$
\frac{1}{\pi}x^{\sigma_{i}-1}\sin(\pi
F_{XY_{\sigma_{i}}}(x)){\mbox
e}^{-\sigma_{i}\Phi_{F_{X}}(x)}.
$$
\end{prop}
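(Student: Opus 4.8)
The plan is to build the finite-dimensional distribution of the GGC$(\theta,F_X)$ subordinator from the increment structure that defines it, and then apply Theorem~\ref{thm2beta} coordinate by coordinate. First I would recall that for a disjoint partition $(C_1,\ldots,C_k)$ of $[0,1/\theta]$, the increments $\zeta_{\theta}(C_i)$ are independent, and by the definition of the subordinator each marginally satisfies $\zeta_{\theta}(C_i)\overset{d}=\zeta_{\theta}(|C_i|)\overset{d}=T_{\theta|C_i|}$. Writing $\sigma_i=\theta|C_i|$, the normalization $\sum_i|C_i|=1/\theta$ forces $\sum_i\sigma_i=1$, so each $\zeta_{\theta}(C_i)$ is a GGC$(\sigma_i,F_X)$ random variable, i.e.\ $\zeta_{\theta}(C_i)\overset{d}=G_{\sigma_i}M_{\sigma_i}(F_X)$. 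Independence of the increments then gives the joint density as the product $\prod_{i=1}^k g_{\sigma_i,F_X}(x_i)$, where $g_{\sigma_i,F_X}$ is whatever density $T_{\sigma_i}$ has.

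The substantive content is then identifying this marginal density explicitly, and here I would invoke Theorem~\ref{thm2beta} directly with $\sigma$ replaced by each $\sigma_i$. That theorem established the equality of GGC classes GGC$(\sigma_i,F_X)=$GGC$(1,F_{XY_{\sigma_i}})$, equivalently $G_{\sigma_i}M_{\sigma_i}(F_X)\overset{d}=G_1M_1(F_{XY_{\sigma_i}})$, together with the explicit mixture density~(\ref{mix}) for this random variable. Applying~(\ref{mix}) with parameter $\sigma_i$ yields precisely the claimed density $g_{\sigma_i,F_X}$, and the stated density of the beta-scaled mean $M_1(F_{XY_{\sigma_i}})\overset{d}=\beta_{\sigma_i,1-\sigma_i}M_{\sigma_i}(F_X)$ is exactly~(\ref{betaid}) at parameter $\sigma_i$. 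Thus each factor in the product is pinned down without ever needing the density of $M_{\sigma_i}(F_X)$ itself, which is the whole point of the construction.

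The main obstacle is not any hard analysis but rather the bookkeeping that legitimizes applying Theorem~\ref{thm2beta}, whose hypotheses require $0<\sigma\leq 1$: one must verify that each $\sigma_i=\theta|C_i|$ lies in $(0,1]$, which follows since the $C_i$ are nonempty subintervals of $[0,1/\theta]$ with $\sum_i|C_i|=1/\theta$, so $0<|C_i|\leq 1/\theta$ and hence $0<\sigma_i\leq 1$. The only genuinely structural input is that a subordinator has independent increments whose law depends only on the increment length, so that the joint density factorizes and each factor is a GGC of the correct order; once that is in place, Theorem~\ref{thm2beta} supplies every factor in closed form. I would therefore spend most of the argument carefully stating the increment-independence and length-homogeneity of $(\zeta_{\theta}(t))$ and the constraint $\sum_i\sigma_i=1$, and dispatch the density identification by a one-line appeal to~(\ref{mix}) and~(\ref{betaid}).
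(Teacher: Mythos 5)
Your proposal is correct and takes essentially the same route as the paper: independence of the $\zeta_{\theta}(C_{i})$ from the independent-increment property, identification of each increment as GGC$(\sigma_{i},F_{X})$ with $0<\sigma_{i}\leq 1$ forced by the partition of $[0,1/\theta]$, and then a direct appeal to Theorem~\ref{thm2beta}. The only cosmetic difference is that the paper verifies the marginal law by writing out the Laplace transform ${\mbox e}^{-\theta|C_{i}|\psi_{F_{X}}(\lambda)}={\mbox e}^{-\sigma_{i}\psi_{F_{X}}(\lambda)}$, whereas you invoke the equivalent distributional identity $\zeta_{\theta}(C_{i})\overset{d}=T_{\theta|C_{i}|}$ directly.
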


\begin{proof} First, since $(C_{1},\ldots,C_{k})$
partitions the interval $[0,1/\theta],$ it follows that
their sizes satisfy $0<|C_{k}|\leq 1/\theta$ and
$\sum_{i=1}^{k}|C_{k}|=1/\theta.$ Since $\zeta_{\theta}$ is
a subordinator the independence of the
$\zeta_{\theta}(C_{i})$ is a consequence of its independent
increment property. In fact these are essentially
equivalent statements. Hence, we can isolate each
$\zeta_{\theta}(C_{i}).$ It follows that for each $i$ the
Laplace transform is given by
$$
\mathbb{E}[{\mbox e}^{-\lambda
\zeta_{\theta}(C_{i})}]={\mbox e}^{-\theta
|C_{i}|\psi_{F_{X}}(\lambda)}={\mbox
e}^{-\sigma_{i}\psi_{F_{X}}(\lambda)},
$$
which shows that each $\zeta_{\theta}(C_{i})$ is
GGC$(\sigma_{i},F_{X})$ for $0<\sigma_{i}\leq 1.$ Hence the
result follows from Theorem~\ref{thm2beta}.
\end{proof}

\section{Exponential Tilting/Esscher Transform}\label{sec:gamma}
In this section we describe how the operation of
\emph{exponential tilting} of the density of a
GGC$(\theta,F_{X})$ random variable leads to a non-trivial
relationship between a mean functional determined by
$F_{X}$ and $\theta,$ and an entire family of mean
functionals indexed by an arbitrary constant $c>0.$
Additionally this will identify a non-obvious relationship
between two classes of mean functionals. Exponential
tilting is merely a catchy phrase for the operation of
applying an exponential change of measure to a density or
more general measure. In mathematical finance and other
applications it is known as an \emph{Esscher Transform}
which is a key tool for option pricing. We mention that
there is much known about exponential tilting of infinitely
divisible random variables and in fact
Bondesson~\cite[example 3.2.5]{BondBook} discusses
explicitly the case of GGC random variables, albeit not in
the way we shall describe it. In addition, examining the
gamma representation in~(\ref{gammarep}) one can see a
relationship to Lo and Weng~\cite[Proposition 3.1]{LW}~(see
also K\"uchler and Sorensen~\cite{Kuc} and
James~\cite[Proposition 2.1]{James05} for results on
exponential tilting of L\'evy processes). However, here our
focus is on the properties of related mean functionals
which leads to genuinely new insights.

Before we elaborate on this, we describe generically what
we mean by exponential tilting. Suppose that $T$ denotes an
arbitrary positive random variable with density, say
$f_{T}.$ It follows that for each positive $c,$ the random
variable $cT$ is well-defined and has density
$$
\frac{1}{c}f_{T}(t/c).
$$
Exponential tilting refers to the exponential change of
measure resulting in a random variable, say
$\tilde{T}_{c},$ defined by the density
$$
f_{\tilde{T}_{c}}(t)=\frac{{\mbox
e}^{-t}({1}/{c})f_{T}(t/c)}{\mathbb{E}[{\mbox e}^{-cT}]}.
$$
Thus from the random variable $T$ one gets a family of random
variables $(\tilde{T}_{c};c>0).$ Obviously the density for each
$\tilde{T_{c}}$ does not differ much. However something
interesting happens when $T$ is a scale mixture of a gamma random
variables, i.e., $T=G_{\theta}M,$ for some random positive random
variable $M$ independent of $G_{\theta}.$ In that case one can
show, see \cite{JamesGamma}, that $T_{c}=G_{\theta}\tilde{M}_{c}$
where $\tilde{M}_{c}$ is sufficiently distinct for each value of
$c.$ We demonstrate this for the case where $M=M_{\theta}(F_{X}).$

 First note that obviously,
 $cM_{\theta}(F_{X})=M_{\theta}(F_{cX}),$ for each $c>0,$ which in
 itself is not a very interesting transformation. Now setting
 $T_{\theta}=G_{\theta}M_{\theta}(F_{X})$ with density denoted as $g_{\theta,F_{X}}$, the corresponding
 random variable $\tilde{T}_{\theta,c}$
 resulting from exponential tilting has density
\begin{equation}
 {\mbox e}^{-t}(1/c)g_{\theta,F_{X}}(t/c){\mbox
 e}^{\theta\psi_{F_{X}}(c)}
 \label{tilt1}
 \end{equation}
and Laplace transform
\begin{equation}
\frac{\mathbb{E}[{\mbox
e}^{-c(1+\lambda)G_{\theta}M_{\theta}(F_{X})}]}{\mathbb{E}[{\mbox
e}^{-cG_{\theta}M_{\theta}(F_{X})}]}={\mbox
e}^{-\theta[\psi_{F_{X}}(c(1+\lambda))-\psi_{F_{X}}(c)]}.
\label{tilt2}
\end{equation}
Now for each $c>0,$ define the random variable
$$A_{c}\overset{d}=\frac{cX}{(cX+1)}.$$ That is, the cdf of the random
variable $A_{c},$ can be expressed as,
$$
F_{A_{c}}(y)=F_{X}\left(\frac{y}{c(1-y)}\right){\mbox { for
}}0<y<1.
$$
In the next theorem we will show that $M_{\theta}(F_{X})$ relates
to the family of mean functionals $(M_{\theta}(F_{A_{c}});c>0),$
by the tilting operation described above. Moreover, we will
describe the relationship between their densities.

\begin{thm}\label{thm1gamma} Suppose that $X$ has distribution $F_{X}$ and for each $c>0,$ $A_{c}\overset{d}=cX/(cX+1)$ is a random variable with distribution $F_{A_{c}}.$  For each $\theta>0$, let
$T_{\theta}=G_{\theta}M_{\theta}(F_{X})$ denote a GGC
$(\theta,F_{X})$ random variable having density
$g_{\theta,F_{X}}.$ Let $\tilde{T}_{\theta,c}$ denote a
random variable with density and Laplace transform
described by (\ref{tilt1}) and~(\ref{tilt2}) respectively.
Then $\tilde{T}_{\theta,c}$ is a GGC$(\theta,F_{A_{c}})$
random variable and hence representable as
$G_{\theta}M_{\theta}(F_{A_{c}}).$ Furthermore, the
following relationships exists between the densities of the
mean functionals $M_{\theta}(F_{X})$ and
$M_{\theta}(F_{A_{c}}).$
\begin{enumerate}
\item[(i)]Suppose that the density of $M_{\theta}(F_{X})$, say, $\xi_{\theta F_{X}}$
is known. Then the density of $M_{\theta}(F_{A_{c}})$ is
expressible as
$$
\xi_{\theta F_{A_{c}}}(y)=\frac{1}{c}{\mbox
e}^{\theta\psi_{F_{X}}(c)}{{(1-y)}^{\theta-2}}\xi_{\theta
F_{X}}\left(\frac{y}{c(1-y)}\right),$$ for $0<y<1.$
\item[(ii)]Conversely, if the density of $M_{\theta}(F_{A_{c}})$,
$\xi_{\theta F_{A_{c}}}(y),$ is known then the density of
$M_{\theta}(F_{X})$ is given by
$$
\xi_{\theta F_{X}}(x)={(1+x)}^{\theta-2}\xi_{\theta
F_{A_{1}}}\left(\frac{x}{1+x}\right) {\mbox
e}^{-\theta\psi_{F_{X}}(1)}.
$$
\end{enumerate}
\end{thm}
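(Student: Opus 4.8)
The plan is to first confirm the GGC identification through Laplace transforms, and then to descend from the GGC densities to the mean-functional densities by recognizing each GGC density as a gamma scale mixture.

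First I would verify that $\tilde{T}_{\theta,c}$ is GGC$(\theta,F_{A_c})$. By~(\ref{CS}) a GGC$(\theta,F_{A_c})$ variable has Laplace transform ${\mbox e}^{-\theta\psi_{F_{A_c}}(\lambda)}$, so in view of~(\ref{tilt2}) it suffices to show
\begin{equation}
\psi_{F_{A_c}}(\lambda)=\psi_{F_X}(c(1+\lambda))-\psi_{F_X}(c).
\label{plan:psi}
\end{equation}
This is a direct computation from~(\ref{Levy}): since $A_{c}=cX/(cX+1)$ we have $1+\lambda A_{c}=(1+c(1+\lambda)X)/(1+cX)$, hence $\mathbb{E}[\log(1+\lambda A_{c})]=\mathbb{E}[\log(1+c(1+\lambda)X)]-\mathbb{E}[\log(1+cX)]$, which is~(\ref{plan:psi}). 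Because a Laplace transform determines the law, $\tilde{T}_{\theta,c}\overset{d}=G_{\theta}M_{\theta}(F_{A_c})$ with density $g_{\theta,F_{A_c}},$ and $M_{\theta}(F_{A_c})$ certainly exists since $A_{c}$ is bounded.

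For part~(i) I would use the representation of a GGC density as a gamma scale mixture of its mean functional. Conditioning $G_{\theta}M_{\theta}(F_X)$ on $M_{\theta}(F_X)$ gives
\begin{equation}
g_{\theta,F_X}(t)=\frac{t^{\theta-1}}{\Gamma(\theta)}\int_{0}^{\infty}m^{-\theta}{\mbox e}^{-t/m}\xi_{\theta F_X}(m)\,dm,
\label{plan:mix}
\end{equation}
and the identical formula relates $g_{\theta,F_{A_c}}$ to $\xi_{\theta F_{A_c}}$, the latter supported on $(0,1)$. Substituting~(\ref{plan:mix}) into the tilting relation~(\ref{tilt1}), namely $g_{\theta,F_{A_c}}(t)={\mbox e}^{-t}(1/c)g_{\theta,F_X}(t/c){\mbox e}^{\theta\psi_{F_X}(c)}$, produces an integral over the $M_{\theta}(F_X)$-scale $u$ with exponent $-t-t/(cu)=-t(cu+1)/(cu)$. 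The natural substitution is $m=cu/(cu+1)$, the $A_{c}$-map itself, under which $(cu+1)/(cu)=1/m$, $u=m/(c(1-m))$ and $du=[c(1-m)^{2}]^{-1}dm$. After this change of variables the integral assumes exactly the form~(\ref{plan:mix}) appropriate to $g_{\theta,F_{A_c}}$, but with mixing density $\frac{1}{c}{\mbox e}^{\theta\psi_{F_X}(c)}(1-m)^{\theta-2}\xi_{\theta F_X}(m/(c(1-m)))$ standing in place of $\xi_{\theta F_{A_c}}(m)$.

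The one genuine obstacle is then to pass from equality of the two scale mixtures to equality of their mixands, which yields the asserted formula for $\xi_{\theta F_{A_c}}$. Here I would invoke the simplifiability of $G_{\theta}$ recalled in the preliminaries: each expression exhibits $g_{\theta,F_{A_c}}$ as the density of $G_{\theta}$ times an independent positive variable, so the two mixing densities must coincide; equivalently, the substitution $s=1/m$ turns the transform in~(\ref{plan:mix}) into a Laplace transform, which is injective. Part~(ii) is then pure algebra: setting $c=1$ in~(i) and substituting $y=x/(1+x)$, so that $1-y=1/(1+x)$ and $y/(1-y)=x$, and solving for $\xi_{\theta F_X}(x)$ gives the stated inversion.
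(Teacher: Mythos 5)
Your proposal is correct and follows essentially the same route as the paper: the identity $1+\lambda A_{c}=(1+c(1+\lambda)X)/(1+cX)$ is just the paper's rearrangement of $\log(1+c(1+\lambda)x)-\log(1+cx)$ read in reverse, and your gamma-scale-mixture substitution into~(\ref{tilt1}) with the change of variables $m=cu/(cu+1)$ is exactly the paper's recognition of the tilted density as that of $G_{\theta}$ times $cM/(cM+1)$ with $M$ having the reweighted density. The only difference is that you spell out the final cancellation step (simplifiability of $G_{\theta}$, or equivalently injectivity of the Laplace transform after $s=1/m$), which the paper leaves implicit.
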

\begin{proof}We proceed by first examining the L\'evy exponent
$[\psi_{F_{X}}(c(1+\lambda))-\psi_{F_{X}}(c)]$ associated
with $\tilde{T}_{\theta,c}$ as described in~(\ref{tilt2}).
Notice that
$$
\psi_{F_{X}}(c(1+\lambda))=\int_{0}^{\infty}\log(1+c(1+\lambda)x)F_{X}(dx)
$$
and $\psi_{F_{X}}(c)$ is of the same form with $\lambda=0.$
Hence isolating the logarithmic terms we can focus on the
difference
$$
\log(1+c(1+\lambda)x)-\log(1+cx).
$$
This is equivalent to
$$
\log\left(1+\frac{cx}{1+cx}\lambda\right)=
\log\left(\frac{1}{1+cx}+\frac{cx}{1+cx}(1+\lambda)\right),
$$
showing that $\tilde{T}_{\theta,c}$ is
GGC$(\theta,F_{A_{c}}).$ This fact can also be deduced from
Proposition 3.1 in Lo and Weng~\cite{LW}. The next step is
to identify the density of $M_{\theta}(F_{A_c}),$ in terms
of the density of $M_{\theta}(F_{X}).$ Using the fact that
$T_{\theta}=G_{\theta}M_{\theta}(F_{X}),$ one may write the
density of $T_{\theta}$ in terms of a gamma mixture as
$$
g_{\theta,F_{X}}(t)=\frac{t^{\theta-1}}{\Gamma(\theta)}\int_{0}^{\infty}{\mbox
e}^{-t/m}m^{-\theta}\xi_{\theta F_{X}}(m)dm.
$$
Hence, rearranging terms in~(\ref{tilt1}), it follows that
the density of $\tilde{T}_{\theta,c}$ can be written as
$$
{\mbox e}^{\theta
\psi_{F_{X}}(c)}\frac{t^{\theta-1}}{\Gamma(\theta)}\int_{0}^{\infty}{\mbox
e}^{-t\frac{cm+1}{cm}}{(cm)}^{-\theta}\xi_{\theta
F_{X}}(m)dm.
$$
Now further algebraic manipulation makes this look like a
mixture of a gamma$(\theta,1)$ random variable, as follows,
$$
\frac{t^{\theta-1}}{\Gamma(\theta)}\int_{0}^{\infty}{\mbox
e}^{-t\frac{cm+1}{cm}}{\left[\frac{cm+1}{cm}\right]}^{\theta}\frac{{\mbox
e}^{\theta \psi_{F_{X}}(c)}\xi_{\theta
F_{X}}(m)}{{(1+cm)}^{\theta}}dm.
$$
Hence it is evident that $M_{\theta}(F_{A_{c}})$ has the
same distribution as a random variable $cM/(cM+1)$ where
$M$ has density
$$
{{\mbox e}^{\theta
\psi_{F_{X}}(c)}{(1+cm)}^{-\theta}\xi_{\theta F_{X}}(m)}.
$$
Thus statements~(i) and~(ii) follow.
\end{proof}

\subsection{Tilting and Beta Scaling}\label{sec:tiltbeta} This section describes what happens when one applies the exponentially tilting operation
relative to a mean functional resulting from beta scaling. Recall
that the tilting operation applied to
$G_{\theta}M_{\theta}(F_{X})$ described in the previous section
sets up a relationship between $M_{\theta}(F_{X})$ and
$M_{\theta}(F_{Ac}).$ Consider the random variable $\beta_{\theta
\sigma,\theta(1-\sigma)}M_{\theta
\sigma}(F_{X})\overset{d}=M_{\theta}(F_{XY_{\sigma}}).$ Then
tilting $G_{\theta}M_{\theta}(F_{XY_{\sigma}})$ as in the previous
section leads to the random variable
$G_{\theta}M_{\theta}(F_{cXY_{\sigma}/(cXY_{\sigma}+1)})$ and
hence relates $$\beta_{\theta \sigma,\theta(1-\sigma)}M_{\theta
\sigma}(F_{X})\overset{d}=M_{\theta}(F_{XY_{\sigma}})$$ to the
Dirichlet mean of order $\theta,$
$$
M_{\theta}(F_{cXY_{\sigma}/(cXY_{\sigma}+1)}).
$$
Now letting $F_{A_{c}Y_{\sigma}}$ denote the distribution
of $A_{c}Y_{\sigma},$ one has
$$
A_{c}Y_{\sigma}\overset{d}=\frac{cXY_{\sigma}}{(cXY_{\sigma}+1)}
$$
and hence
\begin{equation}
M_{\theta}(F_{cXY_{\sigma}/(cXY_{\sigma}+1)})\overset{d}=M_{\theta}
(F_{A_{c}Y_{\sigma}})\overset{d}=\beta_{\theta\sigma,\theta(1-\sigma)}M_{\theta\sigma}(F_{A_{c}}).
\end{equation}
In a way this shows that the order of beta scaling and
tilting can be interchanged. We now derive a result for the
cases of
$M_{1}(F_{XY_{\sigma}})=\beta_{\sigma,1-\sigma}M_{\sigma}(F_{X})$
and
$M_{1}(F_{A_{c}Y_{\sigma}})=\beta_{\sigma,1-\sigma}M_{\sigma}(F_{A_{c}}),$
related by the tilting operation described above. Combining
Theorem~\ref{thm2beta} with Theorem~\ref{thm1gamma} leads
to the following result.

\begin{prop}\label{prop1gamma} For each $0<\sigma\leq 1,$ the
random variables
$M_{1}(F_{XY_{\sigma}})=\beta_{\sigma,1-\sigma}M_{\sigma}(F_{X})$
and
$M_{1}(F_{A_{c}Y_{\sigma}})=\beta_{\sigma,1-\sigma}M_{\sigma}(F_{A_{c}})$
satisfy the following;
\begin{enumerate}
\item[(i)]The density of $M_{1}(F_{A_{c}Y_{\sigma}})$ is expressible as
$$
\xi_{F_{A_{c}Y_{\sigma}}}(y)=\frac{{\mbox
e}^{\sigma\psi_{F_{X}}(c)}y^{\sigma-1}}{\pi
c^{\sigma}{(1-y)}^{\sigma}}\sin\left[\pi
F_{XY_{\sigma}}\left(\frac{y}{c(1-y)}\right)\right]{\mbox
e}^{-\sigma\Phi_{F_{X}}\left(\frac{y}{c(1-y)}\right)}
$$
for $0<y<1.$
\item[(ii)]Conversely, the density of
$M_{1}(F_{XY_{\sigma}})$ is given by
$$
\xi_{F_{XY_{\sigma}}}(x)=\frac{{\mbox
e}^{-\sigma\psi_{F_{X}}(1)}
x^{\sigma-1}}{\pi{(1+x)}}\sin\left[\pi
F_{A_{1}Y_{\sigma}}\left(\frac{x}{1+x}\right)\right]{\mbox
e}^{-\sigma\Phi_{F_{A_{1}}}\left(\frac{x}{1+x}\right)}.
$$
\end{enumerate}
\end{prop}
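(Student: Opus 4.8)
The plan is to obtain both densities by combining the explicit order-one density of Theorem~\ref{thm2beta} with the tilting correspondence of Theorem~\ref{thm1gamma}, precisely as the preceding discussion signals. The one structural input I would use at the start is the identity recorded just above the statement: because $A_{c}Y_{\sigma}\overset{d}=cXY_{\sigma}/(cXY_{\sigma}+1)$, the law $M_{1}(F_{A_{c}Y_{\sigma}})$ is exactly the exponentially tilted partner of $M_{1}(F_{XY_{\sigma}})$. Consequently Theorem~\ref{thm1gamma} applies verbatim with the base law $F_{X}$ replaced by $F_{XY_{\sigma}}$ (so that its $A_{c}$ becomes $A_{c}Y_{\sigma}$) and with $\theta=1$. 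The only bridge needed before substituting is $\psi_{F_{XY_{\sigma}}}(\lambda)=\sigma\psi_{F_{X}}(\lambda)$, immediate from $\mathbb{E}[\log(1+\lambda XY_{\sigma})]=\sigma\mathbb{E}[\log(1+\lambda X)]$ as already used in the proof of Theorem~\ref{thm1beta}.

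For (i) I would invoke Theorem~\ref{thm1gamma}(i) with $\theta=1$ and base $F_{XY_{\sigma}}$, which gives
$$
\xi_{F_{A_{c}Y_{\sigma}}}(y)=\frac{1}{c}\,{\mbox e}^{\sigma\psi_{F_{X}}(c)}\,{(1-y)}^{-1}\,\xi_{F_{XY_{\sigma}}}\!\left(\frac{y}{c(1-y)}\right),\qquad 0<y<1,
$$
and then substitute the explicit density $\xi_{F_{XY_{\sigma}}}$ from (\ref{betaid}) evaluated at $y/(c(1-y))$. The sine and exponential factors emerge already in the target form, namely $\sin[\pi F_{XY_{\sigma}}(y/(c(1-y)))]$ and ${\mbox e}^{-\sigma\Phi_{F_{X}}(y/(c(1-y)))}$, so what remains is purely algebraic: the prefactors $\tfrac1c$, $(1-y)^{-1}$ and ${(y/(c(1-y)))}^{\sigma-1}$ collapse to $y^{\sigma-1}c^{-\sigma}{(1-y)}^{-\sigma}$, reproducing the displayed formula.

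For (ii) I would run the same scheme through Theorem~\ref{thm1gamma}(ii) with $\theta=1$, $c=1$ and base $F_{XY_{\sigma}}$, expressing $\xi_{F_{XY_{\sigma}}}(x)$ via $\xi_{F_{A_{1}Y_{\sigma}}}(x/(1+x))$ with prefactor ${(1+x)}^{-1}{\mbox e}^{-\sigma\psi_{F_{X}}(1)}$, and then insert the order-one density of $M_{1}(F_{A_{1}Y_{\sigma}})$. By Theorem~\ref{thm2beta} applied now to the base law $F_{A_{1}}$, this density carries exactly the factors $\sin[\pi F_{A_{1}Y_{\sigma}}(x/(1+x))]$ and ${\mbox e}^{-\sigma\Phi_{F_{A_{1}}}(x/(1+x))}$ seen in the claim. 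As an internal check I would confirm that pushing the $c=1$ density of (i) back through this inversion returns (\ref{betaid}); that verification rests on the change-of-variables relation $\Phi_{F_{A_{1}}}(x/(1+x))=-\log(1+x)+\Phi_{F_{X}}(x)-\psi_{F_{X}}(1)$ together with (\ref{sinp}) and $\sin(\pi[1-F_{X}])=\sin(\pi F_{X})$.

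The only real friction I anticipate is the exponent bookkeeping rather than any conceptual gap, since the sine and $\Phi$ data are transported unchanged by tilting and the whole content is tracking powers of $c$, $(1-y)$, $y$ and $(1+x)$ through the substitution. The delicate spot is the collapse ${(1+x)}^{-1}{(x/(1+x))}^{\sigma-1}=x^{\sigma-1}{(1+x)}^{-\sigma}$ in (ii): it is here that the exponent on $(1+x)$ in the denominator is pinned down, and my bookkeeping returns $\sigma$ (consistent with inverting (i) at $c=1$), so I would reconcile the displayed power against this before finalizing.
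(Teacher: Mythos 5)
Your proposal is correct and follows the paper's own route: apply Theorem~\ref{thm1gamma} with $\theta=1$ to the beta-scaled base law $F_{XY_{\sigma}}$ (so that its $A_{c}$ becomes $A_{c}Y_{\sigma}$ and $\psi_{F_{XY_{\sigma}}}=\sigma\psi_{F_{X}}$), then substitute the order-one density~(\ref{betaid}) of Theorem~\ref{thm2beta}, which for part~(ii) is applied to the base law $F_{A_{1}}$. Your flagged concern about the exponent in~(ii) is warranted, and your bookkeeping is the correct one: since ${(1+x)}^{-1}{\left(x/(1+x)\right)}^{\sigma-1}=x^{\sigma-1}{(1+x)}^{-\sigma}$, the denominator should read $\pi{(1+x)}^{\sigma}$ rather than $\pi(1+x)$; the printed formula in the proposition is a typo. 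You can confirm this independently: using $\Phi_{F_{A_{1}}}\left(\frac{x}{1+x}\right)=\Phi_{F_{X}}(x)-\psi_{F_{X}}(1)-\log(1+x)$ and $F_{A_{1}Y_{\sigma}}\left(\frac{x}{1+x}\right)=F_{XY_{\sigma}}(x)$, the version with exponent $\sigma$ collapses exactly back to~(\ref{betaid}), whereas the printed version leaves a spurious factor ${(1+x)}^{\sigma-1}$; the two agree only at $\sigma=1$, which is precisely the case invoked later in Proposition~\ref{prop2gamma}, so nothing downstream of the proposition is affected.
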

\begin{proof}For clarity statement [(i)] is obtained by first using
Theorem~\ref{thm1gamma}. Which gives,
$$
\xi_{F_{A_{c}Y_{\sigma}}}(y)=\frac{1}{c}{\mbox
e}^{\psi_{F_{XY_{\sigma}}}(c)}{{(1-y)}^{-1}}\xi_{ F_{XY_
{\sigma}}}\left(\frac{y}{c(1-y)}\right),$$ for $0<y<1.$ It then
remains to substitute the form of the density~(\ref{betaid}) given
in Theorem~\ref{thm2beta}. Statement [(ii)] proceeds in the same
way using~(\ref{mix}).
\end{proof}

Note that even if one can calculate $\Phi_{F_{A_{c}}}$ for
some fixed value of $c,$ it may not be so obvious how to
calculate it for another value. The previous results allow
us to relate their calculation to that of $\Phi_{F_{X}}$ as
described next.

\begin{prop}\label{prop2gamma} Set $A_{c}=cX/(cX+1)$ and define $\Phi_{F_{A_{c}}}(y)=\mathbb{E}[\log(|y-A_{c}|)\indic(A_{c}\neq
y)].$ Then for $0<y<1,$
$$
\Phi_{F_{A_{c}}}(y)=\Phi_{F_{X}}\left(\frac{y}{c(1-y)}\right)
-\psi_{F_{X}}(c)+\log(c(1-y)).
$$
\end{prop}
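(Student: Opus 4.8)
The plan is to compute $\log|y - A_{c}|$ as an explicit function of $X$ and then integrate against $F_{X}$, reducing every term to one of the three quantities on the right-hand side. First I would locate the exceptional point. Since $A_{c}=cX/(cX+1)$ is a strictly increasing function of $X$, the event $\{A_{c}=y\}$ occurs exactly when $X=x_{0}$, where $x_{0}:=y/(c(1-y))$; this is the same point that appears inside $\Phi_{F_{X}}$ in the claimed identity, and is consistent with the cdf relation $F_{A_{c}}(y)=F_{X}(y/(c(1-y)))$ noted just before the statement. Consequently the indicator $\indic(A_{c}\neq y)$ coincides with $\indic(X\neq x_{0})$, which is precisely the indicator built into $\Phi_{F_{X}}(x_{0})$.

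The key algebraic step is the pointwise factorization of $y-A_{c}$. Writing $y-A_{c}=[y(1+cX)-cX]/(1+cX)=[y-cX(1-y)]/(1+cX)$ and noting that $y-cX(1-y)=c(1-y)(x_{0}-X)$, I obtain $y-A_{c}=c(1-y)(x_{0}-X)/(1+cX)$. Since $0<y<1$, $c>0$, and $1+cX>0$, taking absolute values and logarithms gives, on the event $\{X\neq x_{0}\}$,
\[
\log|y-A_{c}|=\log(c(1-y))+\log|x_{0}-X|-\log(1+cX).
\]

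Finally I would take expectations of this identity under $F_{X}$. The constant term contributes $\log(c(1-y))$; the middle term is by definition $\mathbb{E}[\log|x_{0}-X|\indic(X\neq x_{0})]=\Phi_{F_{X}}(x_{0})=\Phi_{F_{X}}(y/(c(1-y)))$; and the last term is $\mathbb{E}[\log(1+cX)]=\psi_{F_{X}}(c)$ by the L\'evy-exponent definition~(\ref{Levy}), entering with a minus sign. Assembling the three pieces yields the asserted formula. The only delicate point is the bookkeeping on the exceptional set: the indicator $\indic(X\neq x_{0})$ attaches to the entire right-hand side, so one should verify that deleting the single point $x_{0}$ changes neither the constant term nor the $\log(1+cX)$ integral, which holds provided $F_{X}$ places no atom at $x_{0}$ (equivalently, $y$ is a continuity point) -- the regime in which the density formulae of Section~\ref{sec:prelim} are applied. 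I expect this reconciliation of the exceptional sets, rather than the elementary algebra, to be the main thing to get right.
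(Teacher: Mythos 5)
Your proof is correct, but it takes a genuinely different route from the paper. The paper derives the identity as a corollary of its tilting machinery: it applies Proposition~\ref{prop1gamma} with $\sigma=1$ to get one expression for the density of $M_{1}(F_{A_{c}})$, writes down the Cifarelli--Regazzini form~(\ref{M1}) of that same density, observes that $\sin(\pi F_{X}(\tfrac{y}{c(1-y)}))=\sin(\pi F_{A_{c}}(y))$ so the sine factors cancel, and then reads off
$\mathrm{e}^{-\Phi_{F_{A_{c}}}(y)}=\frac{\mathrm{e}^{\psi_{F_{X}}(c)}}{c(1-y)}\,\mathrm{e}^{-\Phi_{F_{X}}(y/(c(1-y)))}$,
from which the result follows by taking logarithms. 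You instead work directly from the definition of $\Phi_{F_{A_{c}}}$: the pointwise factorization $y-A_{c}=c(1-y)(x_{0}-X)/(1+cX)$ with $x_{0}=y/(c(1-y))$, followed by taking expectations term by term. Your argument is more elementary and self-contained --- it needs no Dirichlet-mean theory at all, only the definitions of $\Phi_{F_{X}}$ and $\psi_{F_{X}}$ --- and it makes explicit a regularity condition (no atom of $F_{X}$ at $x_{0}$) that the paper leaves implicit, since the paper's route silently relies on the validity of the density formula~(\ref{M1}) for both $M_{1}(F_{X})$ and $M_{1}(F_{A_{c}})$. What the paper's route buys is thematic coherence: it exhibits the identity as a direct consequence of the tilting relationship between $M_{1}(F_{X})$ and $M_{1}(F_{A_{c}})$, which is the point of that section. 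Your accounting of the exceptional set is accurate: the indicator $\indic(A_{c}\neq y)$ is exactly $\indic(X\neq x_{0})$, and when $\mathbb{P}(X=x_{0})>0$ the constant and $\log(1+cX)$ terms each pick up a correction, so the stated identity does require $y$ to be a continuity point --- consistent with the setting in which the paper's density formulae apply.
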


\begin{proof}The result can be deduced by using Proposition~\ref{prop1gamma} in
the case of $\sigma=1.$ First notice that $\sin(\pi
F_{X}(\frac{y}{c(1-y)}))=\sin(\pi F_{A_{c}}(y)).$ Now
equating the form of the density of $M_{1}(F_{A_{c}})$
given by~(\ref{M1}) with the expression given in
Proposition~\ref{prop1gamma}. It follows that
$$
{\mbox e}^{-\Phi_{F_{A_{c}}}(y)}=\frac{{\mbox
e}^{\psi_{F_{X}}(c)}}{ c{(1-y)}}{\mbox
e}^{-\Phi_{F_{X}}\left(\frac{y}{c(1-y)}\right)},
$$
which yields the result.
\end{proof}

\begin{rem} We point out that if $G_{\kappa}$ represents a gamma
random variable for $\kappa\neq \theta$, independent of
$M_{\theta}(F_{X}),$ it is not necessarily true that
$G_{\kappa}M_{\theta}(F_{X})$ is a GGC random variable. For
this to be true $M_{\theta}(F_{X})$ would need to be
equivalent in distribution to some $M_{\kappa}(F_{R}).$ In
that case, our results above would be applied for a
GGC$(\kappa, F_{R})$ model. We will encounter a variation
of such a situation in section~\ref{sec:Diaconis}.
\end{rem}

\section{First Examples}\label{sec:example}
In this section we will demonstrate how our results in
section~\ref{sec:beta} and~\ref{sec:gamma} can be applied
to extend and explain results related to two well known
cases of Dirichlet mean functionals.

\subsection{An example connected to Diaconis and Kemperman}\label{sec:Diaconis}
Set $X=U,$ denoting a uniform$[0,1]$ random variable with
$F_{U}(u)=u$ for $0\leq u\leq 1.$ It is known from Diaconis and
Kemperman~\cite{Diaconis} that the density of $M_{1}(F_{U})$ is
\begin{equation} \label{DPUni} \frac{e}{\pi}\sin(\pi
y)y^{-y}{(1-y)}^{-(1-y)}{\mbox { for }}0<y<1.
\end{equation} Note furthermore that
$\tilde{T}_{1}\overset{d}=G_{1}M_{1}(F_{U})$ is
GGC$(1,F_{U})$ and has a rather strange Laplace transform,
$$
\mathbb{E}[{\mbox e}^{-\lambda G_{1}M_{1}(F_{U})}]={\mbox
e}^{-\psi_{F_{U}}(\lambda)}={\mbox
e}{(1+\lambda)}^{-\frac{\lambda+1}{\lambda}}.
$$
We can use this fact combined with the previous results to
obtain a new explicit expression for the density of what we
believe should be an important mean functional and
corresponding infinitely divisible random variable.

\begin{prop}\label{prop41} Let $W=G_{1}/G'_{1}$ be the ratio of two independent exponential $(1)$ random variables having density
$F_{W}(dx)/dx=(1+x)^{-2}{\mbox { for }}x>0$. Now let
$T_{1}$ denote a GGC$(1,F_{W})$ random variable, with
$\psi_{F_{W}}(\lambda)=\frac{\lambda}{\lambda-1}\log(\lambda).$
Then $T_{1}\overset{d}=G_{1}M_{1}(F_{W})$, where
$M_{1}(F_{W})$ has density,
$$
\xi_{F_{W}}(x)=\frac{1}{\pi}\sin\left(\frac{\pi
x}{1+x}\right)x^{-\frac{x}{1+x}}{\mbox { for }}x>0.
$$
\end{prop}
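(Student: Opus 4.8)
The plan is to recognize $W = G_1/G_1'$ as a specific instance of the $A_c$-type construction from Theorem~\ref{thm1gamma}, and then invert the tilting relationship to read off the density of $M_1(F_W)$ from the known uniform case~(\ref{DPUni}). The key observation is that $W = G_1/G_1'$ has density $(1+x)^{-2}$ on $(0,\infty)$, and if $U$ is uniform$[0,1]$ then $U/(1-U)$ has exactly this density. Thus $W \overset{d}= U/(1-U)$, which is precisely the inverse of the map $u \mapsto u/(1+u)$ appearing in the definition $A_c = cX/(cX+1)$ with $c=1$. So I expect $F_U$ and $F_W$ to be related as $F_X$ and $F_{A_c}$ are in Theorem~\ref{thm1gamma}, but with the roles reversed.

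**Main steps.** First I would verify the stated L\'evy exponent by direct computation: $\psi_{F_W}(\lambda) = \int_0^\infty \log(1+\lambda x)(1+x)^{-2}\,dx$, and check this equals $\frac{\lambda}{\lambda-1}\log(\lambda)$. This confirms $T_1 \overset{d}= G_1 M_1(F_W)$ is indeed GGC$(1,F_W)$ with the claimed exponent. Second, and more importantly, I would set $X = W$ in Theorem~\ref{thm1gamma}(i) with $\theta=1$, $c=1$, and identify $A_1 = W/(W+1)$. Since $W \overset{d}= U/(1-U)$, a short calculation gives $A_1 = W/(W+1) \overset{d}= U$, so $F_{A_1} = F_U$. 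Theorem~\ref{thm1gamma}(i) with $\theta=1$ then expresses the density of $M_1(F_{A_1}) = M_1(F_U)$ in terms of $\xi_{F_W}$, namely
$$
\xi_{F_U}(y) = e^{\psi_{F_W}(1)}(1-y)^{-1}\,\xi_{F_W}\!\left(\frac{y}{1-y}\right).
$$
Inverting this for $\xi_{F_W}$ and substituting the known form~(\ref{DPUni}) of $\xi_{F_U}$ is the core of the proof. Alternatively, Theorem~\ref{thm1gamma}(ii) directly gives $\xi_{F_W}$ from $\xi_{F_{A_1}} = \xi_{F_U}$ via
$$
\xi_{F_W}(x) = (1+x)^{-1}\,\xi_{F_U}\!\left(\frac{x}{1+x}\right)e^{-\psi_{F_W}(1)}.
$$

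**The computation and the obstacle.** Substituting~(\ref{DPUni}) with argument $y = x/(1+x)$ (so $1-y = 1/(1+x)$) into the formula from part~(ii), I would track the factors carefully: the sine term becomes $\sin(\pi x/(1+x))$, and the factor $y^{-y}(1-y)^{-(1-y)}$ together with the $(1+x)^{-1}$ prefactor and the exponential constants should collapse to $x^{-x/(1+x)}$. The main obstacle is purely bookkeeping: confirming that the $e$-prefactor in~(\ref{DPUni}), the $e^{-\psi_{F_W}(1)}$ tilting constant, and the power-of-$(1+x)$ terms from both $y^{-y}$ and $(1-y)^{-(1-y)}$ cancel exactly to leave the clean expression $\frac{1}{\pi}\sin(\pi x/(1+x))\,x^{-x/(1+x)}$. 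Since $\psi_{F_W}(1)$ arises as a limit (the exponent $\frac{\lambda}{\lambda-1}\log\lambda$ is a removable singularity at $\lambda=1$, equal to $1$), I would need to evaluate $\psi_{F_W}(1)=1$ so that $e^{-\psi_{F_W}(1)} = e^{-1}$ cancels the factor $e$ in~(\ref{DPUni}). The rest is algebraic simplification of the exponents, which I expect to go through cleanly once the substitution $y/(1-y) = x$ is made consistently throughout.
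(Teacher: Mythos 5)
Your proposal is correct and follows essentially the same route as the paper: identify $A_{1}=W/(W+1)\overset{d}=U$ (the paper uses $G_{1}/(G_{1}+G'_{1})\overset{d}=U$, you use $W\overset{d}=U/(1-U)$, which is the same fact), then apply statement~(ii) of Theorem~\ref{thm1gamma} to the known density~(\ref{DPUni}), with the constant $e^{-\psi_{F_{W}}(1)}=e^{-1}$ cancelling the factor $e$. The paper's proof is just a terser version of your computation, recording $\mathbb{E}[{\mbox e}^{-T_{1}}]=e^{-1}$ in place of your evaluation of the removable singularity at $\lambda=1$.
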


\begin{proof} First note that it is straightforward to show that
$$\mathbb{E}[{\mbox
e}^{-T_{1}}]=\mathbb{E}[{(1+M_{1}(F_{W}))}^{-1}]=e^{-1}.$$
This fact also establishes the existence of $T_{1}.$ Now we
see that
$$\frac{W}{W+1}=\frac{G_{1}}{(G_{1}+G'_{1})}\overset{d}=U.$$
Hence Theorem~\ref{thm1gamma} shows that $M_{1}(F_{U})$
arises from tilting the density of $G_{1}M_{1}(F_{W}).$ The
density is obtained by applying statement~(ii) of
Theorem~\ref{thm1gamma} to~(\ref{DPUni}). Or by
statement~(ii) of Proposition~\ref{prop1gamma}.\end{proof}

We now apply Theorem~\ref{thm2beta} and
Proposition~\ref{prop1beta} to give a description of the
finite-dimensional distribution of the subordinators
associated with the two random variables above. Here $U$
and $W=G_{1}/G'_{1}$ are as described previously.

\begin{prop}\label{prop42}Let $U$ denote a uniform$[0,1]$ random variable and let $W=G_{1}/G'_{1}$
denote a ratio of independent exponential$(1)$ random
variables. \noindent \begin{enumerate}\item[(i)] Suppose
that $(\tilde{\zeta}_{1}(t); 0<t<1)$ is a GGC$(1,F_{U})$
subordinator, then for $(C_{1},\ldots,C_{k})$ a disjoint
partition of $(0,1),$ the finite-dimensional distribution
has a joint density as in~(\ref{fidi1}), with,
$$
g_{\sigma_{i},F_{U}}(x_{i})=\int_{0}^{1}{\mbox
e}^{-\frac{x_{i}}{y}}\frac{e^{\sigma_{i}}}{\pi}\sin(\pi
\sigma_{i}(1-
y))y^{\sigma_{i}(1-y)-3}{(1-y)}^{-\sigma_{i}(1-y)}dy,
$$
for $i=1,\ldots,k.$
\item[(ii)] That is
$\tilde{\zeta}_{1}(C_{i})\overset{d}=G_{1}M_{1}(F_{UY_{\sigma_{i}}})$
and are independent for $i=1,\ldots, k.$ Furthermore, the
density of
$M_{1}(F_{UY_{\sigma_{i}}})\overset{d}=\beta_{\sigma_{i},1-\sigma_{i}}M_{\sigma_{i}}(F_{U})$
is
$$
\frac{e^{\sigma_{i}}}{\pi}\sin(\pi \sigma_{i}(1-
y))y^{\sigma_{i}(1-y)-1}{(1-y)}^{-\sigma_{i}(1-y)}
$$
for $0<y<1.$
\item[(iii)]If $(\zeta_{1}(t); 0<t<1)$ is a GGC$(1,F_{W})$ subordinator then
the finite-dimensional distribution
$(\zeta_{1}(C_{1}),\ldots,\zeta_{1}(C_{k}))$ is described
now with
$$
g_{\sigma_{i},F_{W}}(x_{i})=\int_{0}^{\infty}{\mbox
e}^{-\frac{x_{i}}{w}}\frac{1}{\pi}\sin\left(\frac{\pi
\sigma_{i}}{1+w}\right)w^{\frac{\sigma_{i}}{1+w}-3}dw.
$$
\item[(iv)] That is
$\zeta_{1}(C_{i})\overset{d}=G_{1}M_{1}(F_{WY_{\sigma_{i}}})$
and are independent for $i=1,\ldots, k.$ Furthermore, the
density of
$M_{1}(F_{WY_{\sigma_{i}}})\overset{d}=\beta_{\sigma_{i},1-\sigma_{i}}M_{\sigma_{i}}(F_{W})$
is
$$
\frac{1}{\pi}\sin\left(\frac{\pi
\sigma_{i}}{1+x}\right)x^{\frac{\sigma_{i}}{1+x}-1}
$$
for $x>0.$
\end{enumerate}
\end{prop}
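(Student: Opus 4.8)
The plan is to specialize the general density formulas of Theorem~\ref{thm2beta} and Proposition~\ref{prop1beta} to the two base distributions $F_U$ and $F_W$, so that the whole proof reduces to computing the two ingredients $\Phi_{F_X}$ and $\sin(\pi F_{XY_\sigma})$ in each case. First I would record the two cdf's: $F_U(y)=y$ on $[0,1]$, while integrating the stated density $(1+x)^{-2}$ gives $F_W(x)=x/(1+x)$ for $x>0$. Since $F_U(x)>0$ and $F_W(x)>0$ for every $x>0$, we have $\mathcal{C}(F_U)=\mathcal{C}(F_W)=(0,\infty)$, so the first branch of~(\ref{sinp}) always applies and yields $\sin(\pi F_{UY_\sigma}(y))=\sin(\pi\sigma(1-y))$ (which vanishes for $y\ge 1$, where $F_U\equiv 1$) and $\sin(\pi F_{WY_\sigma}(x))=\sin(\pi\sigma(1-x/(1+x)))=\sin(\pi\sigma/(1+x))$.

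The only genuine computation is the pair of functions $\Phi_{F_U}$ and $\Phi_{F_W}$, and both can be read off without fresh integration. Comparing the Diaconis--Kemperman density~(\ref{DPUni}) with the $\theta=1$ formula~(\ref{M1}) gives $e^{-\Phi_{F_U}(y)}=e\,y^{-y}(1-y)^{-(1-y)}$, hence $e^{-\sigma\Phi_{F_U}(y)}=e^{\sigma}y^{-\sigma y}(1-y)^{-\sigma(1-y)}$; likewise, comparing the density of $M_1(F_W)$ established in Proposition~\ref{prop41} with~(\ref{M1}) gives $e^{-\Phi_{F_W}(x)}=x^{-x/(1+x)}$, hence $e^{-\sigma\Phi_{F_W}(x)}=x^{-\sigma x/(1+x)}$. (Alternatively $\Phi_{F_U}$ comes from the elementary integral $\int_0^1\log|y-u|\,du$, and $\Phi_{F_W}$ from Proposition~\ref{prop2gamma} specialized to $A_1\overset{d}{=}U$ with $\psi_{F_W}(1)=1$.)

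With these in hand, parts~(ii) and~(iv) follow by substituting the sine factor and $e^{-\sigma\Phi}$ into the density~(\ref{betaid}) and collecting the powers of the argument: for $U$ the factor $y^{\sigma-1}\cdot y^{-\sigma y}$ consolidates to the single exponent $\sigma(1-y)-1$, and for $W$ the factor $x^{\sigma-1}\cdot x^{-\sigma x/(1+x)}$ consolidates to $\sigma/(1+x)-1$, giving exactly the stated densities. Parts~(i) and~(iii) then follow from Proposition~\ref{prop1beta}: since that result shows each $\zeta_1(C_i)$ is GGC$(\sigma_i,F_X)$ with $\sigma_i=|C_i|$ and $\sum_i\sigma_i=1$, the joint density factorizes as $\prod_i g_{\sigma_i,F_X}(x_i)$, and I would obtain each $g_{\sigma_i,F_X}$ by inserting the same two ingredients into the mixture representation~(\ref{mix}). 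For $U$ the $y$-integral automatically truncates to $(0,1)$ because the sine vanishes on the complement of the support, which is what produces the finite range in part~(i); for $W$ the sine is positive on all of $(0,\infty)$, so the range stays unbounded as in part~(iii).

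I do not anticipate a serious obstacle: once $\Phi_{F_U}$ and $\Phi_{F_W}$ are known the scheme is pure substitution, and the shortcut of reading these off from~(\ref{DPUni}) and Proposition~\ref{prop41} sidesteps even the mild labor of evaluating the defining integrals. The only point requiring care is the bookkeeping of the exponents of $y$ (respectively $w$) when the explicit $e^{-\sigma\Phi}$ term is merged with the polynomial prefactors in~(\ref{betaid}) and~(\ref{mix}); keeping the identities $\sigma-\sigma y=\sigma(1-y)$ and $\sigma-\sigma x/(1+x)=\sigma/(1+x)$ in view makes this routine.
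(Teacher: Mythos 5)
Your proposal is correct, and for the uniform part it is essentially the paper's own proof: the paper likewise combines Theorem~\ref{thm2beta}, Proposition~\ref{prop1beta} and~(\ref{DPUni}), notes that $\mathcal{C}(F_{U})=(0,\infty)$ so that $\sin(\pi F_{UY_{\sigma}}(u))=\sin(\pi\sigma(1-u))$ on $(0,1)$ and vanishes for $u\ge 1$, and reads $\Phi_{F_{U}}(y)=y\log y+(1-y)\log(1-y)-1$ off the Diaconis--Kemperman density. The one genuine difference is your treatment of $W$: the paper obtains the density of $M_{1}(F_{WY_{\sigma}})$ by invoking Proposition~\ref{prop1gamma}, i.e.\ by tilting back from the $A_{1}\overset{d}{=}U$ case, whereas you extract $\Phi_{F_{W}}(x)=\frac{x}{1+x}\log x$ by comparing the density of $M_{1}(F_{W})$ in Proposition~\ref{prop41} with~(\ref{M1}) and then substitute directly into~(\ref{betaid}) and~(\ref{mix}). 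Both are legitimate (Proposition~\ref{prop41} is proved before this result, so there is no circularity, and the remark following the proposition endorses exactly this reading of $\Phi_{F_{W}}$); your route is slightly more economical and bypasses the tilting bookkeeping, while the paper's route stresses that the $U$- and $W$-families are exponential tilts of one another.

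One discrepancy you should flag rather than pass over: faithfully inserting your two ingredients into~(\ref{mix}), whose kernel is $y^{\sigma-2}\,e^{-\sigma\Phi_{F_{X}}(y)}$, yields the exponents $\sigma_{i}(1-y)-2$ in part~(i) and $\frac{\sigma_{i}}{1+w}-2$ in part~(iii), not the $-3$ printed in the statement. Your computation gives the correct value: part~(i) is by definition the mixture $\int_{0}^{1}y^{-1}e^{-x_{i}/y}\,\xi_{F_{UY_{\sigma_{i}}}}(y)\,dy$ of the density in part~(ii), and multiplying that density by $y^{-1}$ lowers its exponent $\sigma_{i}(1-y)-1$ to $\sigma_{i}(1-y)-2$; the sanity check $\sigma_{i}=1$, where part~(i) must reduce to the density of $G_{1}M_{1}(F_{U})$ computed straight from~(\ref{DPUni}), confirms this, and the same check works for $W$. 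So the exponents $-3$ displayed in~(i) and~(iii) are typographical errors in the proposition; your substitution scheme, carried out as described, proves the corrected formulas, and since nothing in your argument relies on the misprinted exponents, no repair is needed.
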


\begin{proof} This now follows from Theorem~\ref{thm2beta}, Proposition~\ref{prop1beta} and~(\ref{DPUni}). Specifically, note that
$\mathcal{C}(F_{U})=(0,\infty),$ hence for any
$0<\sigma<1,$
$$
\sin(\pi F_{UY_{\sigma}}(u))=\sin(\pi \sigma(1-u))
$$
for $0<u<1$ and $0$ otherwise. Furthermore
from~(\ref{DPUni}), or by direct argument, it is easy to
see that,
$$
\Phi_{F_{U}}(y)=-\log\left(y^{-y}{(1-y)}^{-(1-y)}\right)-1.
$$
This fact also is evident from Diaconis and
Kemperman~\cite{Diaconis}. It follows that
$M_{1}(F_{UY_{\sigma}})$ has density
$$
\frac{e^{\sigma}}{\pi}\sin(\pi \sigma(1-
y))y^{\sigma(1-y)-1}{(1-y)}^{-\sigma(1-y)}{\mbox { for
}}0<y<1.
$$ The density for $M_{1}(F_{WY_{\sigma}})$ is obtained in a similar fashion by Proposition~\ref{prop1gamma}.
\end{proof}

\begin{rem}Setting
$$
A_{c}\overset{d}=\frac{cG_{1}}{cG_{1}+G'_{1}},
$$
one can easily obtain the density of the random variable
$M_{1}(F_{A_{c}})$ for each $c>0$ by using statement~(ii)
of Theorem~\ref{prop1gamma}. Note also that one can deduce
from the density of $M_{1}(F_{W})$ that
$\Phi_{F_{W}}(x)=[x/(1+x)]\log(x).$ Hence in this case an
application of Proposition~\ref{prop2gamma} shows that,
$$
\Phi_{F_{A_{c}}}(y)=
\frac{y}{c(1-y)+y}\log\left[\frac{y}{c(1-y)}\right]-\frac{c\log(c)}{c-1}+\log[c(1-y)].
$$
We note that otherwise it is not easy to calculate
$\Phi_{A_{c}}$, in this case, by direct arguments.
\end{rem}

\subsection{Reconciling some results of Cifarelli and Mellili}\label{sec:CifarelliMelilli} To
further illustrate our point we show how to reconcile two
apparently unrelated results given in Cifarelli and
Melilli~\cite{CifarelliMelilli}. Let $\beta_{1/2,1/2}$
denote a beta(1/2,1/2) random variable with cdf
$$F_{\beta_{1/2,1/2}}(y)=\frac{2}{\pi}\tan^{-1}(\sqrt{\frac{y}{1-y}})=1-\frac{2}{\pi}\cot^{-1}(\sqrt{\frac{y}{1-y}})
$$
often referred to as the arcsine law. Cifarelli and
Melilli~\cite[p.1394-1395]{CifarelliMelilli} show that for all
$\theta>0,$
$M_{\theta}(F_{\beta_{1/2,1/2}})\overset{d}=\beta_{\theta+1/2,\theta+1/2}.$
Now define the probability density
$$
\varrho_{1/2}(dx)/dx=\frac{1}{\pi}x^{-1/2}{(1+x)}^{-1}.
$$
Cifarelli and Mellili~\cite[p.1394-1395]{CifarelliMelilli} also
show that for $\theta\ge 1,$ $M_{\theta}(\varrho_{1/2})$ has the
density,
\begin{equation}
\frac{\Gamma(\theta+1)}{\Gamma(\theta+\frac{1}{2})\Gamma(\frac{1}{2})}x^{\theta-1/2}{(1+x)}^{-(\theta+1)}.
\label{bg}
\end{equation}
Hjort and Ongaro~\cite{Hjort} recently extend this result
for all $\theta>0.$

Here, however, we note that if $X$ has distribution
$\varrho_{1/2}$ then $X\overset{d}=G_{1/2}/G'_{1/2}$ where
$G_{1/2}$ and $G'_{1/2}$ are independent and identically
distributed gamma random variables. That is
$M_{\theta}(\varrho_{1/2})=M_{\theta}(F_{G_{1/2}/G'_{1/2}}).$
Now using the known fact that
$$
\beta_{1/2,1/2}\overset{d}=\frac{G_{1/2}/G'_{1/2}}{G_{1/2}/G'_{1/2}+1},
$$
we see that $\beta_{1/2,1/2}$ is a special case of $A_{1}$
in Proposition~\ref{prop2gamma}. That is,
$M_{\theta}(F_{\beta_{1/2,1/2}})$ results from
exponentially tilting the density of
$G_{\theta}M_{\theta}(F_{G_{1/2}/G'_{1/2}}),$ with $c=1.$
Hence one could have obtained the density of
$M_{\theta}(F_{G_{1/2}/G'_{1/2}})$ by using the result of
Cifarelli and Melili that
$M_{\theta}(F_{\beta_{1/2,1/2}})\overset{d}=\beta_{\theta+1/2,\theta+1/2},$and
simply applying statement~(ii) of Theorem~\ref{prop1gamma}.

\begin{rem}Note that the following identity for the normalizing constant,
$$
\frac{\Gamma(2\theta+1)}{\Gamma(\theta+1/2)\Gamma(\theta+1/2)}4^{-\theta}=
\frac{\Gamma(\theta+1)}{\Gamma(\theta+1/2)\Gamma(1/2)},
$$
is obtained from the gamma duplication formula, where in
particular
$$
\Gamma(2\theta+1)=\frac{4^{\theta}}{\Gamma(1/2)}{\Gamma(\theta+1)\Gamma(\theta+1/2)}.
$$
\end{rem}

Hereafter we shall use the notation
$X_{1/2}\overset{d}=G_{1/2}/G'_{1/2}.$ Moreover, from the
density in (\ref{bg}), it follows that, using the notation
$X_{1/2,\theta},$
$$
X_{1/2,\theta}\overset{d}=M_{\theta}(F_{X_{1/2}})\overset{d}=\frac{G_{\theta+1/2}}{G_{1/2}}.
$$
In particular $X_{1/2,0}=X_{1/2}.$ Furthermore,
$$
G_{\theta}X_{1/2,\theta}\overset{d}=G_{\theta}M_{\theta}(F_{X_{1/2}})\overset{d}=G_{\theta}\frac{G_{\theta+1/2}}{G_{1/2}}
$$
is GGC$(\theta,F_{X_{1/2}}),$ with Laplace transform
\begin{equation}
\mathbb{E}[{\mbox e}^{-\lambda
G_{\theta}M_{\theta}(F_{X_{1/2}})}]={(1+\sqrt{\lambda})}^{-2\theta}.
\label{linnik}
\end{equation}
Now, for $0<p<1,$ setting $c=p^{2}/{(1-p)}^{2}$ one can
extend the result of Cifarelli and Melilli for mean
functionals defined by the random variable,
$$
B_{1/2,p}=\frac{p^{2}G_{1/2}}{[p^{2}G_{1/2}+q^{2}G'_{1/2}]}=\frac{cX_{1/2}}{cX_{1/2}+1}
$$
having density
$$
\frac{qp}{\pi}\frac{y^{-1/2}{(1-y)}^{-1/2}}{q^{2}y+p^{2}(1-y)}{\mbox
{ for }}0<y<1,
$$
as follows, first the density of the random variable
$$cM_{\theta}(F_{X_{1/2}})=M_{\theta}(F_{cX_{1/2}})$$
is simply,
$$
\xi_{\theta F_{cX_{{1}/{2}}
}}(x)=\frac{\Gamma(\theta+1)}{\Gamma(\theta+\frac{1}{2})\Gamma(\frac{1}{2})}c^{1/2}x^{\theta-1/2}{(c+x)}^{-(\theta+1)}.
$$
The density of $M_{\theta}(F_{B_{{1}/{2},p}})$ is then
obtained by applying statement~(ii) of
Theorem~\ref{prop1gamma} to the density above, to get,
\begin{equation} \xi_{\theta
F_{B_{{1}/{2},p}}}(y)=pq\frac{\Gamma(\theta+1)}{\Gamma(\theta+\frac{1}{2})\Gamma(\frac{1}{2})}\frac{y^{\theta-1/2}{(1-y)}^{\theta-1/2}}{p^{2}(1-y)+q^{2}y}.
\label{Carlton}
\end{equation}
However this density equates with the density of a two parameter
$(1/2,\theta)$ Poisson-Dirichlet random probability measure
evaluated at a sect $C$ say, $\tilde{P}_{1/2,\theta}(C)$, such
that $\mathbb{E}[\tilde{P}_{1/2,\theta}(C)]=p,$ given in
Carlton~\cite[Remark 3.1]{Carlton}~(see also~\cite{JLP}
and~\cite{PY97}). Hence we see that
$M_{\theta}(F_{B_{{1}/{2},p}})\overset{d}=\tilde{P}_{1/2,\theta}(C)$
arises from exponentially tilting the density of
$G_{\theta}M_{\theta}(F_{X_{1/2}}).$

\subsubsection{Results for
$\theta>-1/2$} Notice that the random variable
$$
X_{1/2,\theta}\overset{d}=\frac{G_{\theta+1/2}}{G_{1/2}}
$$
is well defined for all $\theta>-1/2,$ and still has
density~(\ref{bg}) for the negative range of $\theta.$ However
obviously in the range $-1/2<\theta\leq 0$ it is not a Dirichlet
mean functional of order $\theta,$ and as such is not covered by
the result in~\cite{CifarelliMelilli}. The next result shows that
for all $\theta>-1/2$ the random variable $X_{1/2,\theta}$ is a
mean functional of order $1+\theta.$

\begin{prop}\label{prop43} For each $\theta>-1/2$, the random variable $X_{1/2,\theta}$ satisfies the distributional equality.
$$
X_{1/2,\theta}\overset{d}=\beta_{\theta+1/2,1/2}X_{1/2,\theta+1/2}\overset{d}=M_{1+\theta}(F_{X_{1/2}Y_{\frac{\theta+1/2}{1+\theta}}})
$$
As a consequence the random variable,
$G_{1+\theta}X_{1/2,\theta}\overset{d}=G_{\theta+1/2}X_{1/2,\theta+1/2}$
is
GGC$(\theta+1,F_{X_{1/2}Y_{\frac{\theta+1/2}{1+\theta}}})=$GGC$(\theta+1/2,F_{X}).$
\end{prop}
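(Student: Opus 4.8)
The plan is to prove the two distributional equalities separately. The first, $X_{1/2,\theta}\overset{d}=\beta_{\theta+1/2,1/2}X_{1/2,\theta+1/2}$, is pure beta-gamma algebra, while the second, $\beta_{\theta+1/2,1/2}X_{1/2,\theta+1/2}\overset{d}=M_{1+\theta}(F_{X_{1/2}Y_{(\theta+1/2)/(1+\theta)}})$, together with the asserted GGC consequence, is a direct application of Theorem~\ref{thm1beta} once the parameters are matched. The conceptual content is that although $X_{1/2,\theta}$ is \emph{not} a Dirichlet mean of order $\theta$ when $-1/2<\theta\le 0$, it can be lifted to a genuine mean functional of the positive order $1+\theta$.

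For the first equality I would use the representation $X_{1/2,\theta+1/2}\overset{d}=G_{\theta+1}/G_{1/2}$ recorded above, with $G_{\theta+1}$ and $G_{1/2}$ independent and both independent of the beta factor $\beta_{\theta+1/2,1/2}$. The relevant beta-gamma identity is $\beta_{a,b}G_{a+b}\overset{d}=G_{a}$ (with independence), which the paper already uses in the proof of Theorem~\ref{thm1beta}. Taking $a=\theta+1/2$ and $b=1/2$ gives $\beta_{\theta+1/2,1/2}G_{\theta+1}\overset{d}=G_{\theta+1/2}$, still independent of the denominator $G_{1/2}$, whence $\beta_{\theta+1/2,1/2}X_{1/2,\theta+1/2}\overset{d}=G_{\theta+1/2}/G_{1/2}\overset{d}=X_{1/2,\theta}$.

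For the second equality I would invoke Theorem~\ref{thm1beta} with total-mass parameter $1+\theta$, underlying distribution $F_{X_{1/2}}$, and $\sigma=(\theta+1/2)/(1+\theta)$. One checks that $(1+\theta)\sigma=\theta+1/2$ and $(1+\theta)(1-\sigma)=1/2$, so the beta factor in Theorem~\ref{thm1beta} is exactly $\beta_{\theta+1/2,1/2}$, and the inner mean is $M_{(1+\theta)\sigma}(F_{X_{1/2}})=M_{\theta+1/2}(F_{X_{1/2}})\overset{d}=X_{1/2,\theta+1/2}$. Then Theorem~\ref{thm1beta}(i) yields the stated identity, while parts~(iii)--(iv) give $G_{1+\theta}M_{1+\theta}(F_{X_{1/2}Y_{\sigma}})\overset{d}=G_{\theta+1/2}M_{\theta+1/2}(F_{X_{1/2}})$, that is $G_{1+\theta}X_{1/2,\theta}\overset{d}=G_{\theta+1/2}X_{1/2,\theta+1/2}$, together with the equality GGC$(\theta+1,F_{X_{1/2}Y_{\sigma}})=$GGC$(\theta+1/2,F_{X_{1/2}})$.

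I expect the main obstacle to be purely the parameter bookkeeping and the range of validity, which is exactly where the hypothesis $\theta>-1/2$ enters. Theorem~\ref{thm1beta} requires the outer order to be positive and $0<\sigma\le 1$; here $1+\theta>0$ for $\theta>-1/2$, and since $\sigma=(\theta+1/2)/(1+\theta)$ we have $\sigma>0$ precisely when $\theta>-1/2$, with $\sigma\le 1$ for all such $\theta$ (equality never attained, as it would force $1/2=1$). So the work is to verify these three matching identities and the sharp threshold $\theta>-1/2$; there is no genuine analytic difficulty, the result being a transfer of Theorem~\ref{thm1beta} to a range of the shape parameter below zero via the beta-scaling mechanism.
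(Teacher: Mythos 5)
Your proposal is correct and follows essentially the same route as the paper's own proof: the first equality comes from the beta-gamma identity $G_{\theta+1/2}\overset{d}=\beta_{\theta+1/2,1/2}\,G_{\theta+1}$ combined with division by an independent $G_{1/2}$, and the second equality together with the GGC consequence is exactly the paper's application of Theorem~\ref{thm1beta} with outer order $1+\theta$ and $\sigma=(\theta+1/2)/(1+\theta)$. Your explicit verification of the parameter matching and of the fact that $\theta>-1/2$ yields $0<\sigma<1$ simply spells out what the paper's terse proof leaves implicit.
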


\begin{proof}To see this note the simple identity
$$
G_{\theta+1/2}\overset{d}=\beta_{\theta+1/2,1/2}G_{\theta+1}.
$$
The first equality in the Proposition is then obtained by dividing
both sides by an independent $G_{1/2}$ random variable. The second
equality is obtained by using Theorem~\ref{prop1beta}. That is
using the fact that
$\beta_{\theta+1/2,1/2}X_{1/2,\theta+1/2}\overset{d}=\beta_{\theta+1/2,1/2}M_{\theta+1/2}(F_{X_{1/2}}).$
\end{proof}

Thus we see that for $\theta>0,$
$$
X_{1/2,\theta}\overset{d}=M_{\theta}(F_{X_{1/2}})\overset{d}=M_{1+\theta}(F_{X_{1/2}Y_{\frac{\theta+1/2}{1+\theta}}}).
$$

\begin{rem}The random variable $X_{1/2}=G_{1/2}/G'_{1/2}$ is a
special case of an important random variable studied by
Lamperti~\cite{Lamperti}. The general case is defined for
$0<\alpha<1$, $X_{\alpha}=S_{\alpha}/S'_{\alpha}$, where
$S_{\alpha}$ and $S'_{\alpha}$ are iid positive stable
random variables of index $\alpha$. The GGC$(\theta,
F_{X_{1/2}})$ random defined by its Laplace
transform~(\ref{linnik}) is an example of the generalized
positive Linnik random variable which can be represented as
${(G_{\theta/\alpha})}^{1/\alpha}S_{\alpha}.$ Where in this
case $\alpha=1/2$. A thorough investigation of this class,
 and its various implications, initiated in~\cite{JamesGamma}, will
be discussed in a forthcoming
manuscript~\cite{JamesLinnik}~(see also~\cite{JamesYor}).
\end{rem}

\section{Obtaining the finite dimensional distribution of a subordinator of BFRY}
The previous examples revisited some cases that have existed in
the literature for some time. Our final example shows how one can
apply the results in section 2 to  obtain new results for a
subordinator recently studied by Bertoin, Fujita, Roynette and
Yor~\cite{BFRY}. We first supply some background.  For
$0<\alpha<1,$ let $S_{\alpha}$ denote a positive $\alpha$-stable
random variable specified by its Laplace transform
$$
\mathbb{E}[{\mbox e}^{-\lambda S_{\alpha}}]={\mbox
e}^{-\lambda^{\alpha}}.
$$
In addition define
$$
Z_{\alpha}={(\frac{S_{\alpha}}{S'_{\alpha}})}^{\alpha}
$$
where $S'_{\alpha}$ is independent of $S_{\alpha}$ and has the
same distribution. The density of this random variable was
obtained by  Lamperti\cite{Lamperti}(see also Chaumont and
Yor~(\cite{Chaumont}, exercise 4.2.1) and has the remarkably
simple form,
$$
f_{Z_{\alpha}}(y)=\frac{\sin(\pi \alpha)}{\pi
\alpha}\frac{1}{y^{2}+2y\cos(\pi \alpha)+1}{\mbox { for }}y>0.
$$

In order to avoid confusion we will now denote relevant random
variables appearing originally in~\cite{BFRY} as $\Delta_{\alpha}$
and $G_{\alpha}$, as $\Sigma_{\alpha}$ and $\mathbb{G}_{\alpha}$
respectively. From~\cite{BFRY}, let $(\Sigma_{\alpha}(t);t>0)$
denote a subordinator such that
\begin{eqnarray*}
\mathbb{E}[{\mbox e}^{-\lambda
\Sigma_{\alpha}(t)}]&=&{\big((\lambda+1)^{\alpha}-\lambda^{\alpha}\big)}^{t}\\
&=&\exp(-t(1-\alpha)\mathbb{E}[\log(1+1/\mathbb{G}_{\alpha})])
\end{eqnarray*}
where from (\cite{BFRY},  Theorems 1.1 and 1.3),
$\mathbb{G}_{\alpha}$ denotes a random variable such that
$$
\mathbb{G}_{\alpha}\overset{d}=\frac{Z^{1/\alpha}_{1-\alpha}}{1+Z^{1/\alpha}_{1-\alpha}}
$$
and has density on $(0,1)$ given by
$$
f_{\mathbb{G}_{\alpha}}(u)=\frac{\alpha
\sin(\pi\alpha)}{(1-\alpha)\pi}\frac{u^{\alpha-1}{(1-u)}^{\alpha-1}}{u^{2\alpha}-2(1-u)^{\alpha}\cos(\pi\alpha)+{(1-u)}^{2\alpha}}
$$
Hence it follows that the random variable $1/\mathbb{G}_{\alpha}$
takes its values on $(1,\infty)$ with probability one and has cdf
satisfying,
$$
1-F_{1/\mathbb{G}_{\alpha}}(x)=F_{\mathbb{G}_{\alpha}}(1/x)=F_{Z_{1-\alpha}}({(x-1)}^{-\alpha}),
$$
where using properties of the cdf $Z_{1-\alpha},$ (see \cite{FY}
and (\cite{JamesLinnik}, Proposition 2.1)), it follows that for
$x>1$

$$
F_{Z_{1-\alpha}}(\frac{1}{{(x-1)}^{\alpha}})=1-\frac{1}{\pi(1-\alpha)}\cot^{-1}\left(\frac{\cos(\pi
(1-\alpha))+(x-1)^{-\alpha}}{\sin(\pi (1-\alpha))}\right).
$$

It follows as noted by \cite{BFRY} that $(\Sigma_{\alpha}(t);t>0)$
is a GGC$(1-\alpha, F_{1/\mathbb{G}_{\alpha}})$ subordinator.
Where from (\cite{BFRY}, Theorem 1.1) the
GGC$(1-\alpha,F_{1/\mathbb{G}_{\alpha}})$ random variable
$\Sigma_{\alpha}\overset{d}=\Sigma_{\alpha}(1)$ satisfies,
$$
\Sigma_{\alpha}\overset{d}=\frac{G_{1-\alpha}}{\beta_{\alpha,1}}\overset{d}=\frac{G_{1-\alpha}}{U^{1/\alpha}}
$$
where $U$ denotes a uniform$[0,1]$ random variable and for clarity
$G_{1-\alpha}$ is a gamma$(1-\alpha,1)$ random variable. It is
evident, as investigated in \cite{FY}, that
$$
M_{1-\alpha}(F_{1/\mathbb{G}_{\alpha}})\overset{d}=\frac{1}{\beta_{\alpha,1}}\overset{d}=U^{-1/\alpha}
$$
So this constitutes a rare known example where a Dirichlet mean of
order $0<\theta<1$ has a simple recognizable form. Note also that
$M_{1-\alpha}(F_{\mathbb{G}_{\alpha}})\overset{d}=U$ for all
$0<\alpha\leq 1.$ The above points may also be found in the survey
paper of~\cite{JRY}.

Note that when $\alpha=1/2,$
$\mathbb{G}_{1/2}\overset{d}=\beta_{1/2,1/2}$, which then relates
to the results in section 4.2. In addition, it is known that for
each fixed $t,$
$$
\Sigma_{1/2}(t)\overset{d}=\frac{G_{t/2}}{\beta_{1/2,(1+t)/2}}.
$$
This result may be found in \cite{JamesYor} where the case of
$\alpha=1/2$ also belongs to another class of subordinators
indexed by $\alpha$.

Now using the fact discussed in this section we will show how to
use the results in section 2 to explicitly describe the finite
dimensional distribution of the subordinator $\Sigma_{\alpha}(t)$
over the range $0<t\leq 1/(1-\alpha)$, hence by infinite
divisibility for all $t.$ Additionally the analysis will also
yield expressions for mean functionals based on
$F_{1/\mathbb{G}_{\alpha}}.$ We first show how to calculate
\begin{equation}
\mathcal{R}_{\alpha}(x)=\Phi_{F_{1/\mathbb{G}_{\alpha}}}(x)=\mathbb{E}[\log(|t-1/\mathbb{G}_{\alpha}|)\indic(t\neq
1/\mathbb{G}_{\alpha})]. \label{phifunction}
\end{equation}
Using simple beta gamma algebra one has
$$
\Sigma_{\alpha}\overset{d}=\frac{G_{1-\alpha}}{\beta_{\alpha,1}}\overset{d}=G_{1}\frac{\beta_{1-\alpha,\alpha}}{U^{1/\alpha}}
$$
Hence applying Theorem 2.1, with $\theta=1,$ and
$\sigma=1-\alpha,$ it follows that $\Sigma_{\alpha}$ is also
GGC$(1,F_{Y_{1-\alpha}/\mathbb{G}_{\alpha}})$ and \begin{equation}
\frac{\beta_{1-\alpha,\alpha}}{\beta_{\alpha,1}}\overset{d}=\frac{\beta_{1-\alpha,\alpha}}{U^{1/\alpha}}\overset{d}=M_{1}(F_{Y_{1-\alpha}/\mathbb{G}_{\alpha}})
\label{key5}
\end{equation}
 Furthermore, using~(\ref{sinp}), one has
\begin{equation}
\label{sinap} \sin(\pi
F_{Y_{1-\alpha}/\mathbb{G}_{\alpha}}(x))=\left\{\begin{array}{cc}
  \sin(\pi(1-\alpha) F_{\mathbb{G}_{\alpha}}(1/x)), &   {\mbox{ if }} x> 1,\\
 \sin(\pi (1-\alpha)),& {\mbox{ if }} 0<x\leq 1.\\
                     \end{array}\right.
\end{equation}
where again using the properties of $F_{Z_{1-\alpha}},$ as deduced
from (\cite{JamesLinnik}, Proposition 2.1, [(iii)]),
\begin{equation}
\label{cdfid}
 \sin(\pi(1-\alpha)
F_{\mathbb{G}_{\alpha}}(1/x))=\frac{\sin(\pi(1-\alpha))}{{[{(x-1)}^{2\alpha}-2{(x-1)}^{\alpha}\cos(\pi
\alpha)+1]}^{1/2}}
\end{equation}
for $x>1.$ These points lead to the following description of
~(\ref{phifunction}).
\begin{prop}For $0<\alpha<1,$ consider
$\mathcal{R}_{\alpha}(x)$ as defined in (\ref{phifunction}). Then,
\begin{equation}
\label{rfunction} \mathcal{R}_{\alpha}(x)=\left\{\begin{array}{ll}
  \frac{1}{2(1-\alpha)}[\log(\frac{x^{2}}{[(x-1)^{2\alpha}-2(x-1)^{\alpha}\cos(\pi\alpha)+1]})], &   {\mbox{ if }} x> 1,\\
 \frac{1}{1-\alpha}\log({x}/{[1-{(1-x)}^{\alpha}]}),& {\mbox{ if }} 0<x\leq 1.\\
                     \end{array}\right.
\end{equation}
\end{prop}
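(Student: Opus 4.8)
The plan is to determine $\mathcal{R}_{\alpha}$ indirectly rather than by integrating $\log|x-1/\mathbb{G}_{\alpha}|$ against the (awkward) density of $1/\mathbb{G}_{\alpha}$ directly. The idea is to read off the factor $\mathrm{e}^{-(1-\alpha)\Phi_{F_{1/\mathbb{G}_{\alpha}}}}$ from a single density computed by two routes. The entry point is identity~(\ref{key5}), $M_{1}(F_{Y_{1-\alpha}/\mathbb{G}_{\alpha}})\overset{d}=\beta_{1-\alpha,\alpha}/U^{1/\alpha}$. On one hand this is a ratio of explicit independent random variables, so its density follows from an elementary calculation. On the other hand, since $M_{1}(F_{Y_{1-\alpha}/\mathbb{G}_{\alpha}})=\beta_{1-\alpha,\alpha}M_{1-\alpha}(F_{1/\mathbb{G}_{\alpha}})$ is precisely the beta-scaled mean functional of Theorem~\ref{thm2beta} with $\sigma=1-\alpha$ and $X=1/\mathbb{G}_{\alpha}$, the same density is given by the Cifarelli--Regazzini form~(\ref{betaid}),
$$
\xi_{F_{Y_{1-\alpha}/\mathbb{G}_{\alpha}}}(x)=\frac{x^{-\alpha}}{\pi}\sin\!\big(\pi F_{Y_{1-\alpha}/\mathbb{G}_{\alpha}}(x)\big)\,\mathrm{e}^{-(1-\alpha)\mathcal{R}_{\alpha}(x)}.
$$
Equating the two expressions and solving for the exponential factor will isolate $\mathcal{R}_{\alpha}$.

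First I would compute the density of $W=\beta_{1-\alpha,\alpha}/U^{1/\alpha}$. Writing $V=U^{1/\alpha}$ with density $\alpha v^{\alpha-1}$ on $(0,1)$, and recalling that $\beta_{1-\alpha,\alpha}$ has density $\tfrac{\sin(\pi\alpha)}{\pi}b^{-\alpha}(1-b)^{\alpha-1}$, the change of variables $b=wv$ yields
$$
\xi_{F_{Y_{1-\alpha}/\mathbb{G}_{\alpha}}}(w)=\frac{\alpha\sin(\pi\alpha)}{\pi}\,w^{-\alpha-1}\int_{0}^{\min(w,1)}(1-u)^{\alpha-1}\,du,
$$
where the powers of the integration variable conveniently cancel, leaving the elementary integrand $(1-u)^{\alpha-1}$. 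Evaluating the primitive gives $\tfrac{\sin(\pi\alpha)}{\pi}w^{-\alpha-1}$ for $w>1$ and $\tfrac{\sin(\pi\alpha)}{\pi}w^{-\alpha-1}[1-(1-w)^{\alpha}]$ for $0<w\le 1$.

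Next I would substitute the sine term. For $x>1$, formula~(\ref{sinap}) with~(\ref{cdfid}) gives $\sin(\pi F_{Y_{1-\alpha}/\mathbb{G}_{\alpha}}(x))=\sin(\pi\alpha)\big[(x-1)^{2\alpha}-2(x-1)^{\alpha}\cos(\pi\alpha)+1\big]^{-1/2}$, while for $0<x\le 1$ it is simply $\sin(\pi(1-\alpha))=\sin(\pi\alpha)$. Inserting these into the form~(\ref{betaid}) and equating with the density just computed, the common factor $\sin(\pi\alpha)/\pi$ and a power of $x$ cancel, leaving in each regime a clean expression for the exponential: $\mathrm{e}^{-(1-\alpha)\mathcal{R}_{\alpha}(x)}=x^{-1}\big[(x-1)^{2\alpha}-2(x-1)^{\alpha}\cos(\pi\alpha)+1\big]^{1/2}$ when $x>1$, and $\mathrm{e}^{-(1-\alpha)\mathcal{R}_{\alpha}(x)}=x^{-1}[1-(1-x)^{\alpha}]$ when $0<x\le 1$. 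Taking logarithms and dividing by $-(1-\alpha)$ then produces exactly the two branches of~(\ref{rfunction}).

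The only step demanding real care is the density computation: one must track the limit $\min(w,1)$ correctly so that the two regimes $w>1$ and $w\le 1$ emerge, and verify the cancellation of the $v$-powers that collapses the integral to a Beta-type primitive. Everything after that is bookkeeping — cancellation of $\sin(\pi\alpha)/\pi$, matching powers of $x$, and a logarithm — so I do not anticipate a genuine obstacle beyond keeping the algebra aligned across the two cases.
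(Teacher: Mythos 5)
Your proposal is correct and follows essentially the same route as the paper's own proof: compute the density of $\beta_{1-\alpha,\alpha}/\beta_{\alpha,1}$ (your $\beta_{1-\alpha,\alpha}/U^{1/\alpha}$, the same random variable in law) by an elementary calculation, identify it via~(\ref{key5}) with the beta-scaled mean functional whose density Theorem~\ref{thm2beta} expresses through ${\mbox e}^{-(1-\alpha)\mathcal{R}_{\alpha}}$, and equate the two forms using~(\ref{sinap}) and~(\ref{cdfid}) to solve for $\mathcal{R}_{\alpha}$. The only difference is that you carry out explicitly the ``standard calculation'' of the density that the paper merely cites, and your two-regime bookkeeping with $\min(w,1)$ is accurate.
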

\begin{proof}By standard calculations the density of
$B_{\alpha}=\beta_{1-\alpha,\alpha}/\beta_{\alpha,1}$ is given by
$$
f_{B_{\alpha}}(x)=\frac{\sin(\pi
(1-\alpha))}{\pi}x^{-\alpha-1}[1-{(1-x)}^{\alpha}\indic(x\leq 1)]
$$
However we see from~(\ref{key5}) that $B_{\alpha}\overset{d}=
M_{1}(F_{Y_{1-\alpha}/\mathbb{G}_{\alpha}}).$ Hence Theorem 2.2
applies and the density of $B_{\alpha}$ can be written as
$$
f_{B_{\alpha}}(x)=\frac{x^{-\alpha}}{\pi}\sin(\pi
F_{Y_{1-\alpha}/\mathbb{G}_{\alpha}}(x)){\mbox
e}^{-(1-\alpha)\mathcal{R}_{\alpha}(x)}
$$
Now equating the two forms of the density of $B_{\alpha}$ and
using (\ref{sinap}) and~(\ref{cdfid}), one then obtains the
expression for $\mathcal{R}_{\alpha}$
\end{proof}
This leads to our next result,
\begin{prop}For $0<\sigma\leq 1$ the density of
$M_{1}(F_{Y_{\sigma}/\mathbb{G}_{\alpha}})$ is given by,
$$
f_{\alpha,\sigma}(x)=\frac{x^{-\frac{\sigma\alpha}{1-\alpha}-1}}{\pi}\sin(\pi\sigma){[1-{(1-x)}^{\alpha}]}^{\frac{\sigma}{1-\alpha}}
$$
for $0<x\leq 1,$ and for $x>1$
$$
f_{\alpha,\sigma}(x)=\frac{x^{-\frac{\sigma\alpha}{1-\alpha}-1}}{\pi}\sin(\pi\sigma
F_{\mathbb{G}_{\alpha}}(1/x)){[(x-1)^{2\alpha}-2(x-1)^{\alpha}\cos(\pi\alpha)+1]}^{\frac{\sigma}{2(1-\alpha)}}.
$$
\end{prop}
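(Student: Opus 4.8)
The plan is to obtain $f_{\alpha,\sigma}$ as a direct application of Theorem~\ref{thm2beta} with the choice $X=1/\mathbb{G}_{\alpha}$, since $M_{1}(F_{Y_{\sigma}/\mathbb{G}_{\alpha}})\overset{d}=\beta_{\sigma,1-\sigma}M_{\sigma}(F_{1/\mathbb{G}_{\alpha}})$ is precisely a beta-scaled mean functional of order $1$. Theorem~\ref{thm2beta} then writes the density in the form
\begin{equation*}
f_{\alpha,\sigma}(x)=\frac{x^{\sigma-1}}{\pi}\sin\!\big(\pi F_{Y_{\sigma}/\mathbb{G}_{\alpha}}(x)\big)\,{\mbox e}^{-\sigma\mathcal{R}_{\alpha}(x)},
\end{equation*}
where $\mathcal{R}_{\alpha}=\Phi_{F_{1/\mathbb{G}_{\alpha}}}$ is exactly the function already computed in the preceding proposition via~(\ref{rfunction}). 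Thus the entire task reduces to substituting the two ingredients---the sine factor and the exponential-of-log factor---and simplifying.

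First I would handle the sine factor. Since $1/\mathbb{G}_{\alpha}$ is supported on $(1,\infty)$, the set $\mathcal{C}(F_{1/\mathbb{G}_{\alpha}})$ is $(1,\infty)$, so the case split in~(\ref{sinp}) becomes a split at $x=1$. For $0<x\leq 1$ the second branch applies and $\sin(\pi F_{Y_{\sigma}/\mathbb{G}_{\alpha}}(x))=\sin(\pi(1-\sigma))=\sin(\pi\sigma)$; for $x>1$ the first branch applies and, using $1-F_{1/\mathbb{G}_{\alpha}}(x)=F_{\mathbb{G}_{\alpha}}(1/x)$, it becomes $\sin(\pi\sigma F_{\mathbb{G}_{\alpha}}(1/x))$. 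This is the general-$\sigma$ analogue of~(\ref{sinap}), and the mildly delicate point is tracking that the relevant cdf here is that of $1/\mathbb{G}_{\alpha}$, so the surviving argument of the sine is $F_{\mathbb{G}_{\alpha}}(1/x)$ and not $F_{\mathbb{G}_{\alpha}}(x)$.

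Next I would insert the explicit $\mathcal{R}_{\alpha}$ from~(\ref{rfunction}) and convert ${\mbox e}^{-\sigma\mathcal{R}_{\alpha}(x)}$ into powers. On $0<x\leq 1$ this gives $x^{-\sigma/(1-\alpha)}[1-(1-x)^{\alpha}]^{\sigma/(1-\alpha)}$, and on $x>1$ it gives $x^{-\sigma/(1-\alpha)}[(x-1)^{2\alpha}-2(x-1)^{\alpha}\cos(\pi\alpha)+1]^{\sigma/(2(1-\alpha))}$. The only genuine computation is then the bookkeeping of the power of $x$: combining the prefactor $x^{\sigma-1}$ with the factor $x^{-\sigma/(1-\alpha)}$ produces the exponent
\begin{equation*}
\sigma-1-\frac{\sigma}{1-\alpha}=-1-\frac{\sigma\alpha}{1-\alpha},
\end{equation*}
which is exactly the exponent $-\sigma\alpha/(1-\alpha)-1$ appearing in both stated branches. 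Assembling the pieces then reproduces the two displayed formulae verbatim.

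I do not expect a real obstacle: the statement is essentially a corollary of Theorem~\ref{thm2beta} once $\mathcal{R}_{\alpha}$ is in hand, and the two places one could slip are the exponent arithmetic above and the correct placement of the support jump at $x=1$ (together with the inversion $F_{1/\mathbb{G}_{\alpha}}(x)=1-F_{\mathbb{G}_{\alpha}}(1/x)$). Neither is conceptually hard; it is purely a matter of careful substitution.
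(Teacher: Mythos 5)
Your proposal is correct and follows essentially the same route as the paper's own proof: apply Theorem~\ref{thm2beta} with $X=1/\mathbb{G}_{\alpha}$ to get the generic form $\frac{x^{\sigma-1}}{\pi}\sin(\pi F_{Y_{\sigma}/\mathbb{G}_{\alpha}}(x)){\mbox e}^{-\sigma\mathcal{R}_{\alpha}(x)}$, then substitute the expression for $\mathcal{R}_{\alpha}$ from the preceding proposition together with the sine case split at $x=1$ (the general-$\sigma$ analogue of~(\ref{sinap}), using $1-F_{1/\mathbb{G}_{\alpha}}(x)=F_{\mathbb{G}_{\alpha}}(1/x)$). Your exponent bookkeeping $\sigma-1-\frac{\sigma}{1-\alpha}=-\frac{\sigma\alpha}{1-\alpha}-1$ is exactly the simplification implicit in the paper's (terser) proof.
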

\begin{proof} From Theorem 2.2 we have the general form of the density of $M_{1}(F_{Y_{\sigma}/\mathbb{G}_{\alpha}})$ is given by
$$
\frac{x^{\sigma-1}}{\pi}\sin(\pi
F_{Y_{\sigma}/\mathbb{G}_{\alpha}}(x)){\mbox e}^{-\sigma
\mathcal{R}_{\alpha}(x)}
$$
The result is then concluded by applying Proposition 5.1 and
(\ref{sinap}) and~(\ref{cdfid}).
\end{proof}
We now can close with the finite dimensional distribution of the
subordinator, which follows from Proposition 2.1 and Proposition
5.2.
\begin{thm}Consider the GGC$(1-\alpha,F_{1/\mathbb{G}_{\alpha}})$ subordinator
$(\Sigma_{\alpha}(t),t\leq 1/(1-\alpha)).$ Then for a disjoint
partition of the interval $(0,1/(1-\alpha)],$
$(C_{1},\ldots,C_{k}),$ the finite dimensional distribution of
$(\Sigma_{\alpha}(C_{1}),\ldots,\Sigma_{\alpha}(C_{k}))$ is such
that each $\Sigma_{\alpha}(C_{i})$ is independent with density
$$
\int_{0}^{\infty}{\mbox e}^{-x/w}w^{-1}f_{\alpha,\sigma_{i}}(w)dw
$$
where $\sigma_{i}=(1-\alpha)|C_{i}|$ and
$f_{\alpha,\sigma_{i}}(w)$ represents the density of
$M_{1}(F_{Y_{\sigma_{i}}/\mathbb{G}_{\alpha}})$ given in
Proposition 5.2. That is,
$$\Sigma_{\alpha}(C_{i})\overset{d}=G_{1}M_{1}(F_{Y_{\sigma_{i}}/\mathbb{G}_{\alpha}}).$$
\end{thm}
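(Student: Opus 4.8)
The plan is to obtain the statement directly from Proposition~\ref{prop1beta}, the computational substance having already been packaged into Propositions~5.1 and~5.2. Earlier in this section we identified $(\Sigma_{\alpha}(t); t\leq 1/(1-\alpha))$ as a GGC$(1-\alpha, F_{1/\mathbb{G}_{\alpha}})$ subordinator, so I would apply Proposition~\ref{prop1beta} with the choices $\theta=1-\alpha$ and $X=1/\mathbb{G}_{\alpha}$. The opening step is pure bookkeeping on the partition: since $(C_{1},\ldots,C_{k})$ partitions $(0,1/(1-\alpha)]$ we have $\sum_{i=1}^{k}|C_{i}|=1/(1-\alpha)$, so the quantities $\sigma_{i}=(1-\alpha)|C_{i}|$ satisfy $0<\sigma_{i}\leq 1$ and $\sum_{i=1}^{k}\sigma_{i}=1$, which is exactly the hypothesis required by Proposition~\ref{prop1beta}.

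With the hypotheses verified, Proposition~\ref{prop1beta} delivers the structural conclusion in one stroke. By the independent increment property the $\Sigma_{\alpha}(C_{i})$ are independent, and each has Laplace transform ${\mbox e}^{-\sigma_{i}\psi_{F_{1/\mathbb{G}_{\alpha}}}(\lambda)}$, identifying it as a GGC$(\sigma_{i},F_{1/\mathbb{G}_{\alpha}})$ random variable for $0<\sigma_{i}\leq 1$; by Theorem~\ref{thm2beta} this is equivalently a GGC$(1,F_{Y_{\sigma_{i}}/\mathbb{G}_{\alpha}})$ variable, so that $\Sigma_{\alpha}(C_{i})\overset{d}=G_{1}M_{1}(F_{Y_{\sigma_{i}}/\mathbb{G}_{\alpha}})$ with density of the gamma-mixture form~(\ref{mix}).

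It then remains only to make that mixture concrete. The density~(\ref{mix}) can be written as $\int_{0}^{\infty}{\mbox e}^{-x/w}w^{-1}f(w)\,dw$, where $f$ is the order-one mean density~(\ref{betaid}) evaluated at $X=1/\mathbb{G}_{\alpha}$ and $\sigma=\sigma_{i}$. Proposition~5.2 is precisely the computation of this density, reported there as $f_{\alpha,\sigma_{i}}$, so substituting $f=f_{\alpha,\sigma_{i}}$ reproduces the integral displayed in the statement and the proof is complete.

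I do not expect a genuine obstacle in this theorem itself, since the work was discharged upstream: the explicit evaluation of $\mathcal{R}_{\alpha}=\Phi_{F_{1/\mathbb{G}_{\alpha}}}$ in Proposition~5.1 and of the resulting order-one mean density in Proposition~5.2 are exactly what turn the formally valid~(\ref{mix}) into a usable expression. The single point demanding care is notational, namely checking that the distribution $F_{(1/\mathbb{G}_{\alpha})Y_{\sigma_{i}}}$ produced by specializing Proposition~\ref{prop1beta} to $X=1/\mathbb{G}_{\alpha}$ coincides with the $F_{Y_{\sigma_{i}}/\mathbb{G}_{\alpha}}$ used throughout Proposition~5.2, which follows at once from the definition~(\ref{pdfp}).
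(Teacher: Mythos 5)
Your proposal is correct and follows essentially the same route as the paper, which obtains the theorem in one stroke by specializing Proposition~\ref{prop1beta} (with $\theta=1-\alpha$, $X=1/\mathbb{G}_{\alpha}$, so that $1/\theta=1/(1-\alpha)$ and the $\sigma_{i}$ sum to one) and then substituting the explicit density $f_{\alpha,\sigma_{i}}$ of Proposition~5.2 into the gamma-mixture form~(\ref{mix}). The details you spell out --- the partition bookkeeping, the Laplace transform identification of $\Sigma_{\alpha}(C_{i})$ as GGC$(\sigma_{i},F_{1/\mathbb{G}_{\alpha}})$, and the notational check that $F_{(1/\mathbb{G}_{\alpha})Y_{\sigma_{i}}}=F_{Y_{\sigma_{i}}/\mathbb{G}_{\alpha}}$ --- are precisely the implicit content of the paper's one-line derivation.
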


\end{document}